 \newtheorem{thm}{Theorem}[section]
 \newtheorem{cor}[thm]{Corollary}
 \newtheorem{lem}[thm]{Lemma}
 \theoremstyle{definition}
 \theoremstyle{remark}
 \newtheorem{rem}[thm]{Remark}
 \numberwithin{equation}{section}
\begin{document}

%
%
%
%
%
%
%
%
%

\title[On some equalities and inequalities for $K$-frames]
 {On some equalities and inequalities for $K$-frames}



\author[F. Arabyani Neyshaburi]{Fahimeh Arabyani Neyshaburi}
\address{Department of Mathematics and Computer Sciences, Hakim Sabzevari University, Sabzevar, Iran.}
\email{arabianif@yahoo.com}

\author[Gh. Mohajeri Minaei]{Ghadir Mohajeri Minaei}
\address{Technical and Vocational University, Neyshabur Branch, Iran.}
\email{gh-minaei1391@yahoo.com}

\author[E. Anjidani]{Ehsan Anjidani}
\address{Department of Mathematics, University of  Neyshabur, P.O.Box 91136-899, Neyshabur, Iran.}
\email{anjidani@neyshabur.ac.ir}


\subjclass{Primary 42C15; Secondary 42C40, 41A58.}

\vspace{1.7cm}
\begin{abstract}
$K$-frame theory  was recently introduced   to reconstruct elements from the range of a bounded linear operator $K$ in a separable Hilbert space. This significant property  is worthwhile especially in some problems arising in sampling theory. Some equalities and inequalities have been established for ordinary frames and their duals. In this paper, we   continue and extend  these results  to obtain several  important equalities and inequalities for $K$-frames. Moreover,  by applying Jensen's operator inequality  we obtain some new inequalities for $K$-frames.
\end{abstract}

\maketitle
\textbf{Key words:} $K$-frames; Parseval $K$-frames; $K$-duals, Jensen's operator inequality.
\maketitle
\section{Introduction and preliminaries}

\smallskip
\goodbreak
Frames are redundant systems  in separable Hilbert spaces, which  provide  non-unique representations of vectors and  are applied in a wide range of applications \cite{Ben06, Bod05, Bol98, Cas00}.   Working on efficient algorithms for computing reconstruction of  signals without  noisy phase, Balan et al. \cite{Balan2}  discover some new identities for Parseval frames. Then, these identities have been generalized by some researchers for the alternate dual frames and  other types of frames \cite{Balan, Gav06, Xiao2, Zhu}.

Recently, $K$-frames were introduced  by G$\breve{\textrm{a}}$vru\c{t}a \cite{Gav07}  to study atomic systems with respect to a bounded operator $K\in B(\mathcal{H})$.  Recall that atomic decomposition for a closed subspace $\mathcal{H}_{0}$ of a Hilbert space $\mathcal{H}$ introduced by  Feichtinger et al.  in \cite{Han}
 with frame-like properties. Although, the sequences in  atomic decompositions
do not necessarily  belong to $\mathcal{H}_{0}$.  This interesting  property, which was the main motivation  of introducing atomic decomposition and $K$-frame theory, comes from some problems   in sampling theory   \cite{Paw, Werthrt}. In this work, we  extend and improve  these results  to obtain some  equalities and inequalities for $K$-frames.  First, we are going to state some preliminaries of $K$-frames and their duals, which are used in our main results.
Let  $\mathcal{H}$ be a separable Hilbert space and $I$  a countable index set and $K\in B(\mathcal{H})$.  A sequence $F:=\lbrace f_{i}\rbrace_{i \in I} \subseteq \mathcal{H}$ is called a $K$-frame for $\mathcal{H}$, if there exist constants $A, B > 0$ such that
\begin{eqnarray}\label{001}
A \Vert K^{*}f\Vert^{2} \leq \sum_{i\in I} \vert \langle f,f_{i}\rangle\vert^{2} \leq B \Vert f\Vert^{2}, \quad (f\in \mathcal{H}).
\end{eqnarray}
Obviously, $F$ is an ordinary frame  if $K=I_{\mathcal{H}}$  and so $K$-frames are a generalization of the ordinary frames  \cite{Cas00, Chr08, Gav07}. The constants $A$ and $B$ in $(\ref{001})$ are called the lower and the upper bounds of $F$, respectively. A $K$-frame  $F$ is called a Parseval $K$-frame, whenever $\sum_{i\in I} \vert \langle f,f_{i}\rangle\vert^{2} = \Vert K^{*}f\Vert^{2}$. Since $F$ is a Bessel sequence,   similar to ordinary frames the synthesis operator can be defined as $T_{F}: l^{2}\rightarrow \mathcal{H}$; $T_{F}(\{ c_{i}\}_{i\in I}) = \sum_{i\in I} c_{i}f_{i}$. The operator $T_{F}$ is  bounded  and its adjoint which is called the analysis operator is given by $T_{F}^{*}(f)= \{ \langle f,f_{i}\rangle\}_{i\in I}$, and the frame operator is given by $S_{F}: \mathcal{H} \rightarrow \mathcal{H}$; $S_{F}f = T_{F}T_{F}^{*}f = \sum_{i\in I}\langle f,f_{i}\rangle f_{i}$. Unlike ordinary frames, the frame operator of a K-frame is not invertible in general. However,  if $K$ has closed range, then $S_{F}$ from $R(K)$ onto $S_{F}(R(K))$ is an invertible operator \cite{Xiao}.

In \cite{arefi3}, the notion of duality for $K$-frames is introduced and several approaches for construction and characterization of $K$-frames and their dual are presented. Indeed, a Bessel sequence
 $\{g_{i}\}_{i \in I}\subseteq \mathcal{H}$ is called a \textit{$K$-dual} of $\{ f_{i} \}_{i\in I}$ if
\begin{eqnarray}\label{dual1}
Kf = \sum_{i\in I} \langle f,g_{i}\rangle f_{i}, \quad (f\in \mathcal{H}).
\end{eqnarray}

\begin{thm}[Douglas   \cite{Douglas}]\label{equ0}
Let $L_{1}\in B(\mathcal{H}_{1}, \mathcal{H})$ and $L_{2}\in B(\mathcal{H}_{2}, \mathcal{H})$ be bounded linear mappings on given Hilbert spaces. Then the following assertions are equivalent:
\begin{itemize}
\item[(i)]
$R(L_{1}) \subseteq R(L_{2})$;
\item[(ii)]
$L_{1}L_{1}^{*} \leq \lambda^{2}L_{2}L_{2}^{*}$, \quad for some $\lambda > 0$;
\item[(iii)]
There exists a bounded linear mapping $X\in L(\mathcal{H}_{1}, \mathcal{H}_{2})$, such that $L_{1} = L_{2}X$.
\end{itemize}
Moreover, if (i), (ii) and (iii) are valid, then there exists a unique operator $X$ so that $L_{1}=L_{2}X$
\begin{itemize}
\item[(a)]
$\Vert X\Vert ^{2} = \inf \{\alpha>0, L_{1}L_{1}^{*}\leq \alpha L_{2}L_{2}^{*}\}$;
\item[(b)]
$N( L_{1}) = N(X)$;
\item[(c)]
$R(X) \subset \overline{R( L_{2}^{*})}$.
\end{itemize}
\end{thm}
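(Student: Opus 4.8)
The plan is to establish the cycle $(iii)\Rightarrow(ii)\Rightarrow(i)\Rightarrow(iii)$, and then extract the three supplementary properties (a), (b), (c) from the explicit operator produced in the last implication. The implication $(iii)\Rightarrow(ii)$ is immediate: if $L_{1}=L_{2}X$ then for every $h\in\mathcal H$ we have $\langle L_{1}L_{1}^{*}h,h\rangle=\Vert L_{1}^{*}h\Vert^{2}=\Vert X^{*}L_{2}^{*}h\Vert^{2}\le\Vert X\Vert^{2}\Vert L_{2}^{*}h\Vert^{2}=\langle\lambda^{2}L_{2}L_{2}^{*}h,h\rangle$ with $\lambda=\Vert X\Vert$, so the operator inequality holds. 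For $(ii)\Rightarrow(i)$, I would use the standard fact that an inequality $L_{1}L_{1}^{*}\le\lambda^{2}L_{2}L_{2}^{*}$ forces $\Vert L_{1}^{*}h\Vert\le\lambda\Vert L_{2}^{*}h\Vert$ for all $h$; in particular $N(L_{2}^{*})\subseteq N(L_{1}^{*})$, which dualizes to $\overline{R(L_{1})}\subseteq\overline{R(L_{2})}$. Upgrading this to $R(L_{1})\subseteq R(L_{2})$ (not merely the closures) is the subtle point and is where the real work lies.

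The heart of the argument is $(i)\Rightarrow(iii)$ together with the norm estimate. Assuming $R(L_{1})\subseteq R(L_{2})$, I would define $X$ on $R(L_{2}^{*})$ by the rule $X(L_{2}^{*}h):=$ the unique vector in $N(L_{1}^{*})^{\perp}$... more precisely, I would first build a set-theoretic right inverse: for each $x\in\mathcal H_{1}$, since $L_{1}x\in R(L_{2})$, choose $y\in\mathcal H_{2}$ with $L_{2}y=L_{1}x$, and among all such $y$ pick the one of minimal norm, i.e.\ $y\in N(L_{2})^{\perp}=\overline{R(L_{2}^{*})}$. This defines a map $X:\mathcal H_{1}\to\mathcal H_{2}$ with $L_{2}X=L_{1}$ and $R(X)\subseteq\overline{R(L_{2}^{*})}$, giving (c). Linearity of $X$ follows from uniqueness of the minimal-norm solution. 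The main obstacle — boundedness of $X$ — I expect to handle via the closed graph theorem: if $x_{n}\to x$ and $Xx_{n}\to z$, then $L_{1}x_{n}=L_{2}Xx_{n}\to L_{2}z$ and also $L_{1}x_{n}\to L_{1}x$, so $L_{2}z=L_{1}x$; since each $Xx_{n}\in\overline{R(L_{2}^{*})}$ the limit $z$ lies there too, and by minimality $z=Xx$, so the graph is closed and $X$ is bounded.

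Finally I would pin down the quantitative claims. Property (b): $N(X)=N(L_{1})$ holds because $L_{1}=L_{2}X$ gives $N(X)\subseteq N(L_{1})$, while if $L_{1}x=0$ then the minimal-norm solution of $L_{2}y=0$ is $y=0$, so $Xx=0$. For the uniqueness of $X$ subject to $R(X)\subseteq\overline{R(L_{2}^{*})}$: if $L_{2}X'=L_{1}=L_{2}X$ with $R(X')$ also in $\overline{R(L_{2}^{*})}=N(L_{2})^{\perp}$, then $X-X'$ maps into $N(L_{2})\cap N(L_{2})^{\perp}=\{0\}$. For (a), one inclusion comes from $(iii)\Rightarrow(ii)$ applied to this particular $X$, giving $\Vert X\Vert^{2}\ge\inf\{\alpha>0:L_{1}L_{1}^{*}\le\alpha L_{2}L_{2}^{*}\}$; for the reverse, I would show that whenever $L_{1}L_{1}^{*}\le\alpha L_{2}L_{2}^{*}$ one gets $\Vert Xx\Vert^{2}\le\alpha\Vert x\Vert^{2}$ by writing, for $x\in\mathcal H_{1}$ and $h\in\mathcal H_{2}$, $\lvert\langle Xx,L_{2}^{*}h\rangle\rvert^{2}=\lvert\langle L_{1}x,h\rangle\rvert^{2}\le\langle L_{1}L_{1}^{*}h,h\rangle\Vert x\Vert^{2}\le\alpha\langle L_{2}L_{2}^{*}h,h\rangle\Vert x\Vert^{2}=\alpha\Vert L_{2}^{*}h\Vert^{2}\Vert x\Vert^{2}$, and since $Xx\in\overline{R(L_{2}^{*})}$ this controls $\Vert Xx\Vert$; taking the infimum over $\alpha$ finishes the proof.
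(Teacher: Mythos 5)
The paper itself gives no proof of this theorem; it is quoted verbatim from Douglas's 1966 paper, so the only question is whether your argument is self-contained and correct. Most of it is: the implication $(iii)\Rightarrow(ii)$, the minimal-norm construction for $(i)\Rightarrow(iii)$ with the closed graph theorem, the uniqueness argument, and the verifications of (a), (b), (c) are all sound (modulo the slip that $h$ must range over $\mathcal{H}$, not $\mathcal{H}_{2}$, since $L_{2}^{*}$ maps $\mathcal{H}$ into $\mathcal{H}_{2}$).

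There is, however, a genuine gap that breaks your logical cycle. In $(ii)\Rightarrow(i)$ you derive only $\overline{R(L_{1})}\subseteq\overline{R(L_{2})}$, you correctly flag that upgrading this to $R(L_{1})\subseteq R(L_{2})$ is ``where the real work lies'' --- and then you never do that work; you move straight on to $(i)\Rightarrow(iii)$. As written you have established $(iii)\Rightarrow(ii)$ and $(i)\Rightarrow(iii)$ but nothing leading out of $(ii)$, so the equivalence of $(ii)$ with the other two conditions is not proved; and containment of closures genuinely does not imply containment of ranges, so the gap cannot be waved away. The standard repair (and Douglas's own route) is to prove $(ii)\Rightarrow(iii)$ directly: the inequality gives $\Vert L_{1}^{*}h\Vert\leq\lambda\Vert L_{2}^{*}h\Vert$ for all $h\in\mathcal{H}$, so the assignment $L_{2}^{*}h\mapsto L_{1}^{*}h$ is a well-defined linear map on $R(L_{2}^{*})$ bounded by $\lambda$; extend it by continuity to $\overline{R(L_{2}^{*})}$ and by zero on its orthogonal complement to obtain $C\in B(\mathcal{H}_{2},\mathcal{H}_{1})$ with $CL_{2}^{*}=L_{1}^{*}$, and set $X=C^{*}$. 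Then $(iii)\Rightarrow(i)$ is immediate from $R(L_{2}X)\subseteq R(L_{2})$, and the cycle closes with your remaining arguments intact.
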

\begin{rem}\label{First rem}
Suppose  $F = \{f_{i}\}_{i\in I}$ is a $K$-frame  for $\mathcal{H}$ with the optimal bounds $A$ and $B$, respectively. Clearly $B=\Vert S_{F}\Vert$. Also,    using   Douglas' theorem,   there exists an operator $X\in B(\mathcal{H}, l^{2})$ such that
\begin{eqnarray}\label{01.}
 T_{F}X=K,
\end{eqnarray}
 and $\{X^{*}\delta_{i}\}_{i\in I}$ is a $K$-dual of $F$, where $\{\delta_{i}\}_{i\in I}$ is the standard orthonormal basis of $l^{2}$, see \cite{Gav07}.
Moreover, by Douglas' theorem the equation (\ref{01.})  has a unique solution as $X_{F}$ such that
\begin{eqnarray}\label{XW}
\Vert X_{F}\Vert ^{2} = \inf \{\alpha>0,K K^{*}\leq \alpha T_{F}T_{F}^{*}\}.
\end{eqnarray}
Now, let $A$ be the  optimal lower bound of $F$. Then, we obtain
\begin{eqnarray*}
A &=& sup\{\alpha>0: \quad \alpha K K^{*}f \leq T_{F}T_{F}^{*}\}\\
&=&\left(inf\{\beta>0: \quad  K K^{*}\leq \beta T_{F}T_{F}^{*}\}\right)^{-1}\\
&=& \Vert X_{F}\Vert^{-2},
\end{eqnarray*}
which  coincides  with ordinary frames. Indeed, if  $K=I_{\mathcal{H}}$, we easily  obtain $X_{F}=T_{F}^{*}S_{F}^{-1}$ and so
 $\Vert X_{F}\Vert^{-2} =\Vert T_{F}^{*}S_{F}\Vert^{-2}=\Vert S_{F}^{-1}\Vert^{-1}$.
\end{rem}
 For further information in $K$-frame theory  we refer the reader  to  \cite{arefi3, Han, Gav07,  Xiao}.
Throughout this paper, we suppose  $\mathcal{H}$ is a separable Hilbert space, $K^{\dag}$ the pseudo inverse of operator $K$, $I$ a countable index set and for every $J\subset I$, we show the complement of $J$ by $J^{c}$. Also
$I_{\mathcal{H}}$ denotes  the identity operator on $\mathcal{H}$. For two Hilbert spaces $\mathcal{H}_{1}$ and $\mathcal{H}_{2}$ we denote by $B(\mathcal{H}_{1},\mathcal{H}_{2})$ the set of all bounded linear operators between $\mathcal{H}_{1}$ and $\mathcal{H}_{2}$, and we abbreviate $B(\mathcal{H},\mathcal{H})$ by $B(\mathcal{H})$. Also we denote the range of $K\in B(\mathcal{H})$ by $R(K)$, and the orthogonal projection of $\mathcal{H}$ onto a closed subspace $V$ of $\mathcal{H}$  by $\pi_{V}$. Finally, we use of $T_{J}$ and  $S_{J}$ to denote the synthesis operator and frame operator, whenever the index set is  limited to $J\subset I$.

\smallskip
\goodbreak
\section{Main Results}

In this section we obtain several  equalities and inequalities for $K$-frames, $K$-duals and Parseval $K$-frames. These results  extend and improve  some important results of \cite{Balan, Gav06, Zhu}.

\begin{thm}\label{1}
Let $F = \{f_{i}\}_{i\in I}$ be a $K$-frame for $\mathcal{H}$ and $\{g_{i}\}_{i\in I}$ be  a $K$-dual of $F$. Then for every $J\subseteq I$ and $f\in \mathcal{H}$,
\begin{eqnarray*}
\left(\sum_{i\in J}\langle f, g_{i}\rangle \overline{\langle Kf, f_{i}\rangle} \right)-  \left \Vert \sum_{i\in J}\langle f, g_{i}\rangle f_{i}\right \Vert^{2} = \left(\sum_{i\in J^{c}}\overline{\langle f, g_{i}}\rangle \langle Kf, f_{i}\rangle \right)- \left \Vert \sum_{i\in J^{c}}\langle f, g_{i}\rangle f_{i}\right \Vert^{2}.
\end{eqnarray*}
\end{thm}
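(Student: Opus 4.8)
The plan is to reduce both sides of the identity to one and the same bilinear quantity by introducing partial reconstruction operators. For a subset $J\subseteq I$ define $S_{J}\colon\mathcal{H}\to\mathcal{H}$ by $S_{J}f=\sum_{i\in J}\langle f,g_{i}\rangle f_{i}$. Since $\{g_{i}\}_{i\in I}$ is a Bessel sequence, the coefficient sequence $(\langle f,g_{i}\rangle)_{i\in I}$ belongs to $l^{2}$, and since $\{f_{i}\}_{i\in I}$ is Bessel (being a $K$-frame) the series defining $S_{J}f$ converges in $\mathcal{H}$; hence each $S_{J}$ is a well-defined bounded operator. The $K$-duality relation $(\ref{dual1})$ then gives immediately $S_{J}f+S_{J^{c}}f=Kf$ for every $f\in\mathcal{H}$, and this splitting is the only structural fact needed.

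Next I would rewrite each side of the claimed equality as an inner product. Using continuity of the inner product one has $\sum_{i\in J}\langle f,g_{i}\rangle\,\overline{\langle Kf,f_{i}\rangle}=\bigl\langle S_{J}f,\,Kf\bigr\rangle$, so the left-hand side equals $\langle S_{J}f,Kf\rangle-\|S_{J}f\|^{2}$. Likewise, conjugating termwise, $\sum_{i\in J^{c}}\overline{\langle f,g_{i}\rangle}\,\langle Kf,f_{i}\rangle=\overline{\langle S_{J^{c}}f,Kf\rangle}=\langle Kf,S_{J^{c}}f\rangle$, so the right-hand side equals $\langle Kf,S_{J^{c}}f\rangle-\|S_{J^{c}}f\|^{2}$.

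Finally I would substitute $Kf=S_{J}f+S_{J^{c}}f$ into both expressions and expand. The left-hand side becomes $\|S_{J}f\|^{2}+\langle S_{J}f,S_{J^{c}}f\rangle-\|S_{J}f\|^{2}=\langle S_{J}f,S_{J^{c}}f\rangle$, while the right-hand side becomes $\langle S_{J}f,S_{J^{c}}f\rangle+\|S_{J^{c}}f\|^{2}-\|S_{J^{c}}f\|^{2}=\langle S_{J}f,S_{J^{c}}f\rangle$. Thus both sides equal $\langle S_{J}f,S_{J^{c}}f\rangle$, which proves the identity.

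I do not expect a genuine obstacle here: the argument is a short computation once the partial operators $S_{J}$ are in place. The only point deserving a word of care is the legitimacy of interchanging the infinite sums with the inner product and the norm, which is guaranteed by the Bessel property of both $\{f_{i}\}_{i\in I}$ and $\{g_{i}\}_{i\in I}$. It is also worth noting that, unlike the classical frame situation, the operators $S_{J}$ need not be self-adjoint here, which is exactly why the statement is phrased with the mixed coefficients $\langle f,g_{i}\rangle$ and $\langle Kf,f_{i}\rangle$ rather than with squared moduli; this is what makes both sides collapse to the \emph{same} quantity $\langle S_{J}f,S_{J^{c}}f\rangle$ instead of to complex conjugates of one another.
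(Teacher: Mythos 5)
Your proof is correct and follows essentially the same route as the paper: the paper introduces the same partial operator (called $M_{J}$ there), uses $M_{J}+M_{J^{c}}=K$, and its chain of equalities passes through the same common quantity $\langle M_{J^{c}}^{*}M_{J}f,f\rangle=\langle M_{J}f,M_{J^{c}}f\rangle$ that you isolate explicitly. The only cosmetic difference is that you exhibit both sides as equal to $\langle S_{J}f,S_{J^{c}}f\rangle$ rather than transforming one side into the other (and note that the paper reserves the symbol $S_{J}$ for the partial frame operator $\sum_{i\in J}\langle f,f_{i}\rangle f_{i}$, so a different letter would avoid a clash).
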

\begin{proof}
Suppose that $J\subseteq I$, and consider the operator
\begin{eqnarray*}
M_{J}f = \sum_{i\in J}\langle f, g_{i}\rangle f_{i}, \quad (f\in \mathcal{H}).
\end{eqnarray*}
One can easily show that $M_{J}$ is a  well defined and bounded operator on $\mathcal{H}$. Moreover, we have $M_{J}+M_{J^{c}} = K$. Hence,
\begin{eqnarray*}
&&\left(\sum_{i\in J}\langle f, g_{i}\rangle \overline{\langle Kf, f_{i}\rangle} \right)-  \left \Vert \sum_{i\in J}\langle f, g_{i}\rangle f_{i}\right \Vert^{2}\\
&=& \langle K^{*}M_{J}f, f\rangle - \langle M_{J}^{*}M_{J}f, f\rangle\\
&=&  \langle (K^{*}-M_{J}^{*})M_{J}f, f\rangle \\
&=& \langle M_{J^{c}}^{*}(K-M_{J^{c}})f, f\rangle \\
&=& \langle M_{J^{c}}^{*}Kf, f\rangle - \langle M_{J^{c}}^{*}M_{J^{c}}f, f\rangle\\
&=& \langle f, K^{*}M_{J^{c}}f\rangle - \Vert M_{J^{c}}f\Vert ^{2}\\
&=& \left(\sum_{i\in J^{c}}\overline{\langle f, g_{i}}\rangle \langle Kf, f_{i}\rangle \right)- \left \Vert \sum_{i\in J^{c}}\langle f, g_{i}\rangle f_{i}\right \Vert^{2}.
\end{eqnarray*}
\end{proof}
Using Theorem \ref{1} we immediately obtain the following corollary.
\begin{cor}
Let $F = \{f_{i}\}_{i\in I}$ be a $K$-frame with a $K$-dual  $\{g_{i}\}_{i\in I}$. Then for every $J\subseteq I$ and $f\in \mathcal{H}$,
\begin{eqnarray*}
&&\langle T_{J}\{(X_{F}f)_{i}\}_{i\in J}, Kf \rangle- \Vert T_{J}\{(X_{F}f)_{i}\}_{i\in J}\Vert^{2}\\
&=& \overline{\langle T_{J^{c}}\{(X_{F}f)_{i}\}_{i\in J^{c}}, Kf\rangle} -  \Vert T_{J^{c}}\{(X_{F}f)_{i}\}_{i\in J^{c}}\Vert^{2},
\end{eqnarray*}
where $X_{F}$ is as in $(\ref{XW})$.
\end{cor}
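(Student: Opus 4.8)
The plan is to recognize this corollary as a direct translation of Theorem~\ref{1} into the language of the operator $X_F$ from Remark~\ref{First rem}. The key observation is that the specific $K$-dual $\{g_i\}_{i\in I}$ appearing in Theorem~\ref{1} can be taken to be $\{X_F^*\delta_i\}_{i\in I}$, which by Remark~\ref{First rem} is indeed a $K$-dual of $F$. With this choice, $\langle f, g_i\rangle = \langle f, X_F^*\delta_i\rangle = \langle X_F f, \delta_i\rangle = (X_F f)_i$, so the coefficient sequence $\{\langle f, g_i\rangle\}_{i\in I}$ is exactly $\{(X_F f)_i\}_{i\in I}$.

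Given that identification, I would rewrite each term of Theorem~\ref{1} accordingly. The sum $\sum_{i\in J}\langle f, g_i\rangle f_i = \sum_{i\in J}(X_F f)_i f_i = T_J\{(X_F f)_i\}_{i\in J}$ by the definition of the restricted synthesis operator $T_J$. Likewise $\left\|\sum_{i\in J}\langle f, g_i\rangle f_i\right\|^2 = \|T_J\{(X_F f)_i\}_{i\in J}\|^2$, and similarly for $J^c$. For the bilinear terms, $\sum_{i\in J}\langle f, g_i\rangle\overline{\langle Kf, f_i\rangle} = \sum_{i\in J}(X_F f)_i\overline{\langle Kf, f_i\rangle} = \langle \sum_{i\in J}(X_F f)_i f_i, Kf\rangle = \langle T_J\{(X_F f)_i\}_{i\in J}, Kf\rangle$, while on the right-hand side $\sum_{i\in J^c}\overline{\langle f, g_i\rangle}\langle Kf, f_i\rangle = \overline{\sum_{i\in J^c}\langle f,g_i\rangle\overline{\langle Kf,f_i\rangle}} = \overline{\langle T_{J^c}\{(X_F f)_i\}_{i\in J^c}, Kf\rangle}$. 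Substituting these four identities into the equation of Theorem~\ref{1} yields precisely the claimed equality.

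There is essentially no obstacle here beyond bookkeeping: the only point requiring a moment's care is the conjugation pattern, since Theorem~\ref{1} is stated with $\overline{\langle Kf, f_i\rangle}$ on the left and $\overline{\langle f, g_i\rangle}$ on the right, so one must be attentive to match the bar placement correctly when converting to inner products with $Kf$ — this is why the left-hand bilinear term appears without a bar and the right-hand one appears conjugated. I would also briefly note that $\{X_F^*\delta_i\}_{i\in I}$ is a legitimate $K$-dual and a Bessel sequence, so Theorem~\ref{1} genuinely applies to it; everything else is a substitution.
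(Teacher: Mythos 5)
Your proposal is correct and is exactly the derivation the paper intends: the paper states the corollary as an immediate consequence of Theorem~2.1 applied to the canonical $K$-dual $\{X_F^*\delta_i\}_{i\in I}$, with $\langle f,g_i\rangle=(X_Ff)_i$ and the sums rewritten via the restricted synthesis operators $T_J$, $T_{J^c}$. Your attention to the conjugation pattern is the only nontrivial point, and you handled it correctly.
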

\begin{thm}
Let $F = \{f_{i}\}_{i\in I}$ be a $K$-frame for $\mathcal{H}$ and $\{g_{i}\}_{i\in I}$ be a $K$-dual of $F$. Then for every bounded sequence $\{\alpha_{i}\}_{i\in I}$ and $f\in \mathcal{H}$,
\begin{eqnarray*}
&&\left(\sum_{i\in I}\alpha_{i} \langle f, g_{i}\rangle \overline{\langle Kf, f_{i}\rangle} \right)-  \left \Vert \sum_{i\in I}\alpha_{i}\langle f, g_{i}\rangle f_{i}\right \Vert^{2}\\
&=& \left(\sum_{i\in I}(1-\overline{\alpha_{i}})\overline{\langle f, g_{i}}\rangle \langle Kf, f_{i}\rangle \right)- \left \Vert \sum_{i\in I}(1-\alpha_{i})\langle f, g_{i}\rangle f_{i}\right \Vert^{2}.
\end{eqnarray*}
\end{thm}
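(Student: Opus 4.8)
The plan is to run exactly the same argument as in Theorem \ref{1}, with the indicator sequence $\chi_{J}$ replaced by the arbitrary bounded weight $\{\alpha_{i}\}_{i\in I}$. Define two operators on $\mathcal{H}$ by
\[
M_{\alpha}f=\sum_{i\in I}\alpha_{i}\langle f,g_{i}\rangle f_{i},\qquad
M_{1-\alpha}f=\sum_{i\in I}(1-\alpha_{i})\langle f,g_{i}\rangle f_{i}.
\]
First I would check that these series converge and define bounded operators on $\mathcal{H}$: since $\{g_{i}\}_{i\in I}$ is Bessel, the sequence $\{\langle f,g_{i}\rangle\}_{i\in I}$ lies in $\ell^{2}$; multiplying by the bounded scalars $\alpha_{i}$ (respectively $1-\alpha_{i}$) keeps it in $\ell^{2}$, and applying the synthesis operator $T_{F}$ of the $K$-frame $F$ shows the series converge unconditionally, with $\|M_{\alpha}\|\le\|\{\alpha_{i}\}\|_{\infty}\sqrt{B_{F}B_{g}}$ where $B_{F},B_{g}$ are Bessel bounds for $F$ and for $\{g_{i}\}$. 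The decisive structural fact is then $M_{\alpha}+M_{1-\alpha}=K$, which is immediate from the $K$-duality relation $Kf=\sum_{i\in I}\langle f,g_{i}\rangle f_{i}$ together with the freedom to split an unconditionally convergent series.

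With this in hand the computation is purely formal and mirrors the proof of Theorem \ref{1}. Writing the left-hand side as a difference of quadratic forms,
\[
\Big(\sum_{i\in I}\alpha_{i}\langle f,g_{i}\rangle\overline{\langle Kf,f_{i}\rangle}\Big)-\Big\|\sum_{i\in I}\alpha_{i}\langle f,g_{i}\rangle f_{i}\Big\|^{2}
=\langle K^{*}M_{\alpha}f,f\rangle-\langle M_{\alpha}^{*}M_{\alpha}f,f\rangle,
\]
I would factor $M_{\alpha}$ on the right and use $K^{*}-M_{\alpha}^{*}=M_{1-\alpha}^{*}$ and $M_{\alpha}=K-M_{1-\alpha}$ to obtain
\[
\langle(K^{*}-M_{\alpha}^{*})M_{\alpha}f,f\rangle=\langle M_{1-\alpha}^{*}Kf,f\rangle-\langle M_{1-\alpha}^{*}M_{1-\alpha}f,f\rangle=\langle Kf,M_{1-\alpha}f\rangle-\|M_{1-\alpha}f\|^{2},
\]
and finally expand $\langle Kf,M_{1-\alpha}f\rangle=\sum_{i\in I}(1-\overline{\alpha_{i}})\,\overline{\langle f,g_{i}\rangle}\,\langle Kf,f_{i}\rangle$ together with $\|M_{1-\alpha}f\|^{2}=\|\sum_{i\in I}(1-\alpha_{i})\langle f,g_{i}\rangle f_{i}\|^{2}$ to land exactly on the right-hand side of the statement.

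The only point requiring genuine care — and it is the same point as in Theorem \ref{1} — is the infinite-dimensional bookkeeping: that $M_{\alpha}$ is a well-defined bounded operator whose quadratic form reproduces both terms on the left, and that the splitting $M_{\alpha}+M_{1-\alpha}=K$ is legitimate. All of this is underwritten by the Bessel property, which places the relevant coefficient sequences in $\ell^{2}$ and makes $T_{F}$ applicable termwise; once that is secured, no idea beyond Theorem \ref{1} is needed, and indeed the choice $\alpha_{i}=\chi_{J}(i)$ recovers Theorem \ref{1} verbatim.
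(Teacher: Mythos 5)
Your proposal is correct and is exactly the argument the paper intends: the theorem is stated without proof precisely because it follows by replacing the indicator weights $\chi_{J}$ in the proof of Theorem \ref{1} with the bounded weights $\{\alpha_{i}\}$, using $M_{\alpha}+M_{1-\alpha}=K$ in place of $M_{J}+M_{J^{c}}=K$. Your attention to the boundedness of $M_{\alpha}$ via the Bessel property of $\{g_{i}\}$ and the synthesis operator $T_{F}$ is the right justification and matches the paper's treatment of $M_{J}$.
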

In the sequel, we survey some inequalities on Parseval $K$-frames. These results also extend some important inequalites for frames in \cite{Balan, Gav06}.
\begin{thm}\label{parseval1212}
Assume that $F = \{f_{i}\}_{i\in I}$ is a Parseval $K$-frame for $\mathcal{H}$. For every  $f\in \mathcal{H}$, $J\subseteq I$ and  $E\subseteq J^{c}$ we have
\begin{eqnarray*}
&&\left \Vert \sum_{i\in J\cup E}\langle f, f_{i}\rangle f_{i}\right \Vert^{2}- \left \Vert \sum_{i\in J^{c}\setminus E}\langle f, f_{i}\rangle f_{i}\right \Vert^{2} \\
&=& \left \Vert \sum_{i\in J}\langle f, f_{i}\rangle f_{i}\right \Vert^{2}- \left \Vert \sum_{i\in J^{c}}\langle f, f_{i}\rangle f_{i}\right \Vert^{2}+2Re\sum_{i\in E}\langle f, f_{i}\rangle\overline{\langle KK^{*}f, f_{i}\rangle},
\end{eqnarray*}
\end{thm}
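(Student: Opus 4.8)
The plan is to reduce everything to the partial frame operators and exploit the one special feature of a Parseval $K$-frame: its frame operator is exactly $KK^{*}$. First I would observe that, since $F$ is a Parseval $K$-frame, $\langle S_{F}f,f\rangle=\sum_{i\in I}|\langle f,f_{i}\rangle|^{2}=\Vert K^{*}f\Vert^{2}=\langle KK^{*}f,f\rangle$ for every $f\in\mathcal{H}$; both $S_{F}$ and $KK^{*}$ being self-adjoint, this forces $S_{F}=KK^{*}$. For an arbitrary $J\subseteq I$ write $S_{J}f=\sum_{i\in J}\langle f,f_{i}\rangle f_{i}=T_{J}T_{J}^{*}f$, a bounded positive operator, and record the obvious additivity $S_{J_{1}\cup J_{2}}=S_{J_{1}}+S_{J_{2}}$ for disjoint $J_{1},J_{2}$. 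Since $E\subseteq J^{c}$, this gives
\[
S_{J\cup E}=S_{J}+S_{E},\qquad S_{J^{c}\setminus E}=S_{J^{c}}-S_{E},\qquad S_{J}+S_{J^{c}}=S_{F}=KK^{*}.
\]

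Next I would simply expand the two squared norms on the left-hand side. Using the first two identities,
\begin{align*}
\left\Vert\sum_{i\in J\cup E}\langle f,f_{i}\rangle f_{i}\right\Vert^{2}-\left\Vert\sum_{i\in J^{c}\setminus E}\langle f,f_{i}\rangle f_{i}\right\Vert^{2}
&=\Vert S_{J}f+S_{E}f\Vert^{2}-\Vert S_{J^{c}}f-S_{E}f\Vert^{2}\\
&=\Vert S_{J}f\Vert^{2}-\Vert S_{J^{c}}f\Vert^{2}+2Re\,\langle (S_{J}+S_{J^{c}})f,\,S_{E}f\rangle,
\end{align*}
where crucially the two copies of $\Vert S_{E}f\Vert^{2}$ cancel, since $S_{E}$ enters with a $+$ in $S_{J\cup E}$ and a $-$ in $S_{J^{c}\setminus E}$. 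Substituting $S_{J}+S_{J^{c}}=KK^{*}$ and expanding $S_{E}f=\sum_{i\in E}\langle f,f_{i}\rangle f_{i}$ turns the cross term into
\[
2Re\,\langle KK^{*}f,\,S_{E}f\rangle=2Re\sum_{i\in E}\overline{\langle f,f_{i}\rangle}\,\langle KK^{*}f,f_{i}\rangle=2Re\sum_{i\in E}\langle f,f_{i}\rangle\,\overline{\langle KK^{*}f,f_{i}\rangle},
\]
the last equality using $Re\,z=Re\,\bar z$. Combining the displays yields precisely the claimed identity.

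There is no serious obstacle: once $S_{F}=KK^{*}$ is in hand the statement is a one-line algebraic identity. The only points needing care are the sign bookkeeping that makes the $\Vert S_{E}f\Vert^{2}$ terms cancel rather than reinforce, and the cosmetic conjugation at the end to match the form stated in the theorem. (The same computation can alternatively be phrased with the synthesis operators $T_{J}$, as in the preceding corollary, but the operator formulation above is the cleanest.)
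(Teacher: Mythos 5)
Your proof is correct, and it takes a somewhat different route from the paper's. The paper first derives the operator identity $S_{J}^{2}-S_{J^{c}}^{2}=KK^{*}S_{J}-S_{J^{c}}KK^{*}$ from $S_{J}+S_{J^{c}}=KK^{*}$, uses it to convert each difference of squared norms into the bilinear expression $\langle KK^{*}S_{J}f,f\rangle-\langle S_{J^{c}}KK^{*}f,f\rangle$, applies this once to the partition $(J\cup E,\,J^{c}\setminus E)$ and once to $(J,\,J^{c})$, and compares the resulting sums to isolate the $E$-term. You instead keep the squared norms as they are, write $S_{J\cup E}=S_{J}+S_{E}$ and $S_{J^{c}\setminus E}=S_{J^{c}}-S_{E}$, and expand directly: the two $\Vert S_{E}f\Vert^{2}$ terms cancel by the sign difference and the cross terms combine into $2\,Re\,\langle (S_{J}+S_{J^{c}})f,S_{E}f\rangle=2\,Re\,\langle KK^{*}f,S_{E}f\rangle$, which is the claimed correction term. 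Your version is shorter and avoids the quadratic operator identity altogether; you also justify $S_{F}=KK^{*}$ explicitly (via polarization/self-adjointness), which the paper takes for granted. What the paper's detour buys is the reusable intermediate identity $\Vert S_{J}f\Vert^{2}-\Vert S_{J^{c}}f\Vert^{2}=\langle KK^{*}S_{J}f,f\rangle-\langle S_{J^{c}}KK^{*}f,f\rangle$, which it invokes again in later results; your argument does not produce that by-product but is otherwise a clean, complete proof of the stated theorem.
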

\begin{proof}
First note that for every $J\subseteq I$, we have $S_{J}+S_{J^{c}} = KK^{*}$. Hence $S_{J}^{2}-S_{J^{c}}^{2} =KK^{*}S_{J}-S_{J^{c}}KK^{*}$. In fact
\begin{eqnarray*}
S_{J}^{2}-S_{J^{c}}^{2} &=& S_{J}^{2} - (KK^{*}-S_{J})^{2}\\
&=& KK^{*}S_{J}+S_{J}KK^{*} - (KK^{*})^{2}\\
&=&  KK^{*}S_{J}  -(KK^{*}- S_{J}) KK^{*}\\
&=&KK^{*}S_{J}-S_{J^{c}}KK^{*}.
\end{eqnarray*}
Hence, for every $f\in \mathcal{H}$ we obtain
 \begin{eqnarray*}
\Vert S_{J}f\Vert^{2}-\Vert S_{J^{c}}f\Vert^{2}= \langle KK^{*}S_{J}f, f\rangle -\langle S_{J^{c}}KK^{*}f, f\rangle,
\end{eqnarray*} and consequently
\begin{eqnarray*}
&&\left \Vert \sum_{i\in J\cup E}\langle f, f_{i}\rangle f_{i}\right \Vert^{2}- \left \Vert \sum_{i\in J^{c}\setminus E}\langle f, f_{i}\rangle f_{i}\right \Vert^{2} \\
&=& \sum_{i\in J\cup E}\langle f, f_{i}\rangle \overline{\langle KK^{*}f, f_{i}\rangle}-\overline{\sum_{i\in J^{c}\setminus E}\langle f, f_{i}\rangle \overline{\langle KK^{*}f, f_{i}\rangle}}\\
&=&\sum_{i\in J}\langle f, f_{i}\rangle \overline{\langle KK^{*}f, f_{i}\rangle}-\overline{\sum_{i\in J^{c}}\langle f, f_{i}\rangle \overline{\langle KK^{*}f, f_{i}\rangle}}+2Re\sum_{i\in E}\langle f, f_{i}\rangle \overline{\langle KK^{*}f, f_{i}\rangle}\\
&=& \left \Vert \sum_{i\in J}\langle f, f_{i}\rangle f_{i}\right \Vert^{2}- \left \Vert \sum_{i\in J^{c}}\langle f, f_{i}\rangle f_{i}\right \Vert^{2}+2Re\sum_{i\in E}\langle f, f_{i}\rangle\overline{\langle KK^{*}f, f_{i}\rangle}.
\end{eqnarray*}
\end{proof}

\begin{thm}\label{10}
Let $F = \{f_{i}\}_{i\in I}$ be a Parseval $K$-frame for $\mathcal{H}$. Then for every $J\subseteq I$ and $f\in \mathcal{H}$,
\begin{eqnarray*}
&&Re\left(\sum_{i\in J^{c}}\langle f, f_{i}\rangle \overline{\langle KK^{*}f, f_{i}\rangle} \right)+ \left \Vert \sum_{i\in J}\langle f, f_{i}\rangle f_{i}\right \Vert^{2}\\
&=&Re\left(\sum_{i\in J}\langle f, f_{i}\rangle \overline{\langle KK^{*}f, f_{i}\rangle} \right)+ \left \Vert \sum_{i\in J^{c}}\langle f, f_{i}\rangle f_{i}\right \Vert^{2} \geq \dfrac{3}{4}\Vert KK^{*}f\Vert^{2}.
\end{eqnarray*}
\end{thm}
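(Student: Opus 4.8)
**The plan is to derive the equality from the operator identity $S_J + S_{J^c} = KK^*$ (already established in the proof of Theorem~\ref{parseval1212}) and then obtain the lower bound by a completing-the-square argument.** First I would note that since $F$ is a Parseval $K$-frame, the frame operator restricted appropriately satisfies $S_F = KK^*$, and for any $J \subseteq I$ the partial frame operators obey $S_J + S_{J^c} = KK^*$, with each $S_J$ a positive operator. Writing everything in terms of inner products, $\mathrm{Re}\,\sum_{i\in J^c}\langle f,f_i\rangle\overline{\langle KK^*f,f_i\rangle} = \mathrm{Re}\,\langle S_{J^c}f, KK^*f\rangle = \langle S_{J^c}f, KK^*f\rangle$ (the latter quantity is real since $S_{J^c}$ is self-adjoint and $KK^*f$ ranges suitably — more carefully, $\langle S_{J^c} f, KK^* f\rangle = \langle S_{J^c} f, (S_J+S_{J^c})f\rangle$, and I would keep the real part until I see it is automatically real), and similarly $\|\sum_{i\in J}\langle f,f_i\rangle f_i\|^2 = \langle S_J^2 f, f\rangle = \|S_J f\|^2$.

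Thus the left-hand side is $\langle KK^*f, S_{J^c}f\rangle + \langle S_J^2 f, f\rangle$ and the right-hand side is $\langle KK^*f, S_J f\rangle + \langle S_{J^c}^2 f, f\rangle$. To prove they are equal I would substitute $KK^* = S_J + S_{J^c}$ in both: the left becomes $\langle (S_J+S_{J^c})f, S_{J^c}f\rangle + \langle S_J^2 f,f\rangle = \langle S_J S_{J^c} f, f\rangle + \|S_{J^c}f\|^2 + \|S_J f\|^2$, and the right becomes $\langle S_J S_{J^c} f, f\rangle + \|S_J f\|^2 + \|S_{J^c} f\|^2$ by the symmetric computation (using $\langle S_{J^c} S_J f, f\rangle = \langle S_J S_{J^c} f, f\rangle$, which holds after taking real parts since $S_J, S_{J^c}$ are self-adjoint). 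Hence both sides coincide, which is the claimed equality; this also shows the common value is real, justifying dropping $\mathrm{Re}$ where convenient.

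For the inequality, I would work with the common value in its symmetrized form. Using $KK^* = S_J + S_{J^c}$, the common value equals $\langle S_J S_{J^c} f, f\rangle + \|S_J f\|^2 + \|S_{J^c} f\|^2$ — but it is cleaner to write it as $\langle KK^* S_J f, f\rangle + \|S_{J^c} f\|^2$ and then complete the square. Set $T = S_J$, so $KK^* - T = S_{J^c}$ is positive and $T$ is positive with $T \le KK^*$. Then the common value is $\langle KK^*\, T f, f\rangle + \langle (KK^* - T)^2 f, f\rangle$. Expanding: $\langle (KK^*-T)^2 f,f\rangle = \|KK^*f\|^2 - \langle KK^* T f,f\rangle - \langle T KK^* f,f\rangle + \|Tf\|^2$, so the sum is $\|KK^*f\|^2 - \langle T KK^* f, f\rangle + \|Tf\|^2$. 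Rearranging gives $\|KK^*f\|^2 + \langle (T - \tfrac{1}{2}KK^*)^2 f, f\rangle - \tfrac14 \|KK^*f\|^2 \ge \tfrac{3}{4}\|KK^*f\|^2$, since $\langle (T - \tfrac12 KK^*)^2 f, f\rangle = \|(T-\tfrac12 KK^*)f\|^2 \ge 0$; here I use that $T$ commutes with $KK^*$ only in spirit — actually I must be careful, so I would instead write $\langle T KK^* f, f\rangle = \tfrac12\langle (KK^*)^2 f,f\rangle - \tfrac12\langle(KK^*-2T)KK^*f,f\rangle$ is not symmetric, so the honest route is: let $a = \|Tf\|$-type terms and use $\langle T KK^* f,f\rangle + \langle KK^* T f, f\rangle = 2\mathrm{Re}\langle T KK^* f, f\rangle$ and bound $\mathrm{Re}\langle T KK^* f, f\rangle \le \tfrac14\|KK^*f\|^2 + \|Tf\|^2$ by the elementary inequality $2\mathrm{Re}\langle u,v\rangle \le \tfrac12\|v\|^2 + 2\|u\|^2$ with $u = Tf$, $v = KK^*f$.

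\textbf{The main obstacle} I anticipate is handling the non-commutativity of $S_J$, $S_{J^c}$, and $KK^*$ carefully enough that the scalar completing-the-square estimate genuinely applies: one cannot treat $\langle KK^* S_J f, f\rangle$ as if $KK^*$ and $S_J$ commuted. The safe workaround is to never square operators that don't commute — instead reduce the whole common value to the form $\|S_{J^c}f\|^2 + \mathrm{Re}\langle S_J f,\, KK^*f\rangle$ plus a manifestly real remainder, and then apply the pointwise scalar inequality $\mathrm{Re}\langle u, w\rangle \ge \|u\|^2 - \tfrac{1}{4}\|w\|^2$ (equivalently $\|u - \tfrac12 w\|^2 \ge 0$) with $u = S_{J^c} f$ and $w = KK^* f$, together with the trivial bound $\|S_J f\|^2 \ge 0$. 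Tracking which cross-terms are automatically real (because they equal their own conjugates via self-adjointness of $S_J$ and $S_{J^c}$) is the one delicate bookkeeping point; once that is pinned down, the inequality $\ge \tfrac34\|KK^*f\|^2$ drops out immediately.
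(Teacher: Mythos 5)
Your argument is correct and is essentially the paper's own proof: both rest on the identity $S_J+S_{J^c}=KK^*$ together with a completing-the-square step around $\tfrac{1}{2}KK^*$, and your vector-level square $\Vert S_{J^c}f-\tfrac{1}{2}KK^*f\Vert^2\geq 0$ is literally the same quantity as the paper's operator-level term $\langle(\tfrac{1}{2}KK^*-S_J)^2f,f\rangle$, since both equal $\tfrac{1}{4}\Vert S_Jf-S_{J^c}f\Vert^2$. The one slip is the displayed scalar inequality $\mathrm{Re}\langle u,w\rangle\geq\Vert u\Vert^2-\tfrac{1}{4}\Vert w\Vert^2$, which is false in general (take $w=0$, $u\neq 0$) and is not what $\Vert u-\tfrac{1}{2}w\Vert^2\geq 0$ says; the inequality you actually need, and in effect use, is $\mathrm{Re}\langle u,w\rangle\leq\Vert u\Vert^2+\tfrac{1}{4}\Vert w\Vert^2$, so this is a sign typo rather than a gap.
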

\begin{proof}
Since  $S_{J}^{2}-S_{J^{c}}^{2}= KK^{*}S_{J}-S_{J^{c}}KK^{*}$, we can write
\begin{eqnarray*}
S_{J}^{2}+S_{J^{c}}^{2} = 2\left(\dfrac{KK^{*}}{2} - S_{J}\right)^{2}+\dfrac{(KK^{*})^{2}}{2}\geq \dfrac{(KK^{*})^{2}}{2},
\end{eqnarray*}
and consequently
\begin{eqnarray*}
&&KK^{*}S_{J}+S_{J^{c}}^{2}+ (KK^{*}S_{J}+S_{J^{c}}^{2})^{*}\\
&=& KK^{*}S_{J}+S_{J^{c}}^{2}+S_JKK^*+S_{J^{c}}^{2}\\
&=& KK^{*}(S_{J}+S_{J^{c}}) + S_{J}^{2}+S_{J^{c}}^{2} \\
&=& (S_{J}+S_{J^{c}})KK^{*} + S_{J}^{2}+S_{J^{c}}^{2}
\geq \dfrac{3}{2}(KK^{*})^{2}.
\end{eqnarray*}
Thus, we obtain
\begin{eqnarray*}
&&Re\left(\sum_{i\in J^{c}}\langle f, f_{i}\rangle \overline{\langle KK^{*}f, f_{i}\rangle} \right)+ \left \Vert \sum_{i\in J}\langle f, f_{i}\rangle f_{i}\right \Vert^{2}\\
&=& Re\left(\sum_{i\in J}\langle f, f_{i}\rangle \overline{\langle KK^{*}f, f_{i}\rangle} \right)+ \left \Vert \sum_{i\in J^{c}}\langle f, f_{i}\rangle f_{i}\right \Vert^{2}\\
&=& \dfrac{1}{2}\left(\langle KK^{*}S_{J}f, f\rangle + \langle S_{J^{c}}^{2}f, f\rangle+ \langle f, KK^{*}S_{J}f\rangle + \langle f, S_{J^{c}}^{2}f\rangle \right)\\
&\geq& \dfrac{3}{4}\Vert KK^{*}f\Vert^{2}.
\end{eqnarray*}
This completes the proof.
\end{proof}

Notice that, in Theorem \ref{10} if we take $K=I_{\mathcal{H}}$ it reduce to Theorem 2.2 in \cite{Zhu}.
Also,  Theorem \ref{10} leads to the following concept, which is a generalization of \cite{Gav06} for Parseval frames. Let $F = \{f_{i}\}_{i\in I}$ be a Parseval $K$-frame. Then we consider
\begin{eqnarray*}
&&v_{+}(F, K, J)=\sup_{f\neq 0}\dfrac{Re\left(\sum_{i\in J^{c}}\overline{\langle f, f_{i}\rangle} \langle KK^{*}f, f_{i}\rangle \right)+ \left \Vert \sum_{i\in J}\langle f, f_{i}\rangle f_{i}\right \Vert^{2}}{\Vert KK^{*}f\Vert^{2}},\\
&&v_{-}(F, K, J)=\inf_{f\neq 0}\dfrac{Re\left(\sum_{i\in J^{c}}\overline{\langle f, f_{i}\rangle} \langle KK^{*}f, f_{i}\rangle \right)+ \left \Vert \sum_{i\in J}\langle f, f_{i}\rangle f_{i}\right \Vert^{2}}{\Vert KK^{*}f\Vert^{2}}.
\end{eqnarray*}
Now, we are going to  present some properties of these notations.  For this we need the next lemma.
\begin{lem}\label{1.5}
Let $K$ be a closed range operator and $F=\{f_{i}\}_{i\in I}$ be  a  $K$-frame for $\mathcal{H}$. Then
\begin{itemize}
\item[(i)]
$\left \Vert \sum_{i\in I}\langle f, f_{i}\rangle f_{i}\right \Vert^{2}\leq \Vert S_{F}\Vert \sum_{i\in I}\vert\langle f, f_{i}\rangle \vert^{2}, \quad (f\in \mathcal{H})$.
\item[(ii)]
$\sum_{i\in I}\vert\langle f, f_{i}\rangle \vert^{2} \leq \Vert K^{\dagger}\Vert^{2}\Vert X_{F}\Vert^{2} \left \Vert \sum_{i\in I}\langle f, f_{i}\rangle f_{i}\right \Vert^{2}, \quad (f\in R(K))$.
\end{itemize}
\end{lem}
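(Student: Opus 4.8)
The plan is to treat the two inequalities separately, using the frame operator and the Douglas operator $X_F$ from Remark \ref{First rem}. For part (i), I would start from the identity $\sum_{i\in I}\langle f,f_i\rangle f_i = S_F f$ and write $\Vert S_F f\Vert^2 = \langle S_F f, S_F f\rangle = \langle S_F^2 f, f\rangle$. Since $S_F$ is a positive bounded operator, $S_F^2 \leq \Vert S_F\Vert\, S_F$ in the operator order, so $\langle S_F^2 f, f\rangle \leq \Vert S_F\Vert \langle S_F f, f\rangle = \Vert S_F\Vert \sum_{i\in I}\vert\langle f,f_i\rangle\vert^2$. This is the standard argument and I expect no obstacle here; note it does not even need closed range of $K$.

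For part (ii), the point is to go in the reverse direction, which is where closed range of $K$ is essential. Recall from Remark \ref{First rem} that the optimal lower bound of $F$ equals $\Vert X_F\Vert^{-2}$, so for $f\in \mathcal{H}$ we have $\Vert X_F\Vert^{-2}\Vert K^* f\Vert^2 \leq \sum_{i\in I}\vert\langle f,f_i\rangle\vert^2 = \langle S_F f, f\rangle$. I would apply this with $f$ replaced by $S_F f$ where $f\in R(K)$: since $S_F$ maps $R(K)$ into $S_F(R(K)) \subseteq \overline{R(K)}$ — more precisely, using that $K$ has closed range so $R(K)$ is closed and $S_F|_{R(K)}$ is invertible onto its image — one gets $\Vert X_F\Vert^{-2}\Vert K^* S_F f\Vert^2 \leq \langle S_F^2 f, f\rangle = \Vert S_F f\Vert^2$. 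The remaining task is to bound $\sum_{i\in I}\vert\langle f,f_i\rangle\vert^2 = \Vert S_F f\Vert \cdot (\text{something})$; more directly, I would instead write $\sum_{i\in I}\vert\langle f,f_i\rangle\vert^2 = \langle S_F f, f\rangle$ and estimate $\langle S_F f, f\rangle \le \Vert K^{\dagger}\Vert^2 \Vert K^* S_F f\Vert \cdot \Vert \,\cdot\,\Vert$ using the bound $\Vert g\Vert \le \Vert K^{\dagger}\Vert \Vert K^* g\Vert$ valid for $g\in R(K)$ (a standard pseudoinverse estimate, since $K^{\dagger*}K^* = \pi_{R(K)}$), applied to $g = f$ and then to the relation between $K^* S_F f$ and the frame coefficients.

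Let me restructure part (ii) more cleanly. For $f\in R(K)$, the key estimate is $\Vert f\Vert \leq \Vert K^{\dagger}\Vert\, \Vert K^* f\Vert$ — this holds because $K^{\dagger}$ restricted to $R(K)$ is a bounded left inverse pattern: $K K^{\dagger} = \pi_{\overline{R(K)}}$, hence $f = K K^{\dagger} f$ for $f \in R(K)$, giving $\Vert f\Vert \le \Vert K\Vert\Vert K^\dagger f\Vert$; the symmetric statement with $K^*$ follows from $(K^*)^{\dagger} = (K^{\dagger})^*$. Then $\sum_i \vert\langle f,f_i\rangle\vert^2 = \langle S_F f,f\rangle \le \Vert S_F f\Vert\,\Vert f\Vert \le \Vert S_F f\Vert\, \Vert K^\dagger\Vert\, \Vert K^* f\Vert$, and I bound $\Vert K^* f\Vert$ via the lower $K$-frame bound $\Vert X_F\Vert^{-2}\Vert K^* f\Vert^2 \le \langle S_F f, f\rangle$, i.e. $\Vert K^* f\Vert \le \Vert X_F\Vert\,\langle S_F f,f\rangle^{1/2} = \Vert X_F\Vert\,(\sum_i\vert\langle f,f_i\rangle\vert^2)^{1/2}$. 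Substituting gives $\sum_i\vert\langle f,f_i\rangle\vert^2 \le \Vert S_F f\Vert\,\Vert K^\dagger\Vert\,\Vert X_F\Vert\,(\sum_i\vert\langle f,f_i\rangle\vert^2)^{1/2}$, and dividing through by the square root (the case where the sum is zero being trivial) and squaring yields exactly $\sum_i\vert\langle f,f_i\rangle\vert^2 \le \Vert K^\dagger\Vert^2\Vert X_F\Vert^2\Vert S_F f\Vert^2 = \Vert K^\dagger\Vert^2\Vert X_F\Vert^2\bigl\Vert\sum_i\langle f,f_i\rangle f_i\bigr\Vert^2$. The main obstacle is purely bookkeeping: making sure the pseudoinverse estimate $\Vert K^* f\Vert \geq \Vert K^{\dagger}\Vert^{-1}\Vert f\Vert$ is correctly stated and that it genuinely requires $R(K)$ closed (so that $K^{\dagger}$ is bounded), together with keeping the restriction $f\in R(K)$ in force throughout.
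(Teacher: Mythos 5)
Your argument is correct and follows essentially the same route as the paper: part (i) is the standard Bessel estimate (the paper simply cites it), and for part (ii) the paper likewise combines $\sum_i\vert\langle f,f_i\rangle\vert^2=\langle S_Ff,f\rangle$, Cauchy--Schwarz, the identity $f=(K^{\dagger})^*K^*f$ for $f\in R(K)$ giving $\Vert f\Vert\le\Vert K^{\dagger}\Vert\,\Vert K^*f\Vert$, and the optimal lower bound $\Vert X_F\Vert^{-2}\Vert K^*f\Vert^2\le\sum_i\vert\langle f,f_i\rangle\vert^2$, then cancels a factor of the sum. Only a cosmetic remark: your side justification $\Vert f\Vert\le\Vert K\Vert\,\Vert K^{\dagger}f\Vert$ is a detour; the identity $(K^{\dagger})^*K^*=\pi_{R(K)}$ that you also state is the one actually needed.
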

\begin{proof}
The first part is  a known result for every Bessel sequence \cite{Chr08} and so for  $K$-frames. Hence, we only need to show  $(ii)$.
Let $f\in R(K)$. Then we have
\begin{eqnarray*}
\left(\sum_{i\in I}\vert\langle f, f_{i}\rangle \vert^{2}\right)^{2}&=&\vert \langle S_{F}f, f\rangle\vert^{2}\\
&\leq& \Vert S_{F}f\Vert^{2} \Vert f\Vert^{2}\\
&=& \Vert S_{F}f\Vert^{2} \Vert (K^{\dagger})^{*} K^{*}f\Vert^{2}\\
&\leq& \Vert S_{F}f\Vert^{2} \Vert (K^{\dagger})\Vert^{2} \Vert K^{*}f\Vert^{2}\\
&\leq& \Vert S_{F}f\Vert^{2} \Vert (K^{\dagger})\Vert^{2} \Vert X_{F}\Vert^{2} \sum_{i\in I}\vert\langle f, f_{i}\rangle \vert^{2}.
\end{eqnarray*}
\end{proof}
\begin{thm}
Suppose  $F = \{f_{i}\}_{i\in I}$ is a Parseval $K$-frame for $\mathcal{H}$. The following assertion hold:
\begin{itemize}
\item[(i)]
$\dfrac{3}{4} \leq v_{-}(F, K, J)\leq v_{+}(F, K, J)\leq \Vert K\Vert \Vert K^{\dagger}\Vert (1+\Vert K\Vert)$
\item[(ii)]
$ v_{+}(F, K, J)=  v_{+}(F, K, J^{c})$, and $ v_{-}(F, K, J)= v_{-}(F, K, J^{c})$
\item[(iii)]
$ v_{+}(F, K, I)= v_{-}(F, K, I)  = 1$, and $ v_{+}(F, K, \emptyset)= v_{-}(F, K, \emptyset) =1.$
\end{itemize}
\end{thm}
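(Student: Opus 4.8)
The plan is to derive each statement directly from the identity and lower bound already established in Theorem \ref{10}, together with the two norm comparisons in Lemma \ref{1.5}. Write $Q(f,J)$ for the numerator $Re\bigl(\sum_{i\in J^{c}}\overline{\langle f,f_{i}\rangle}\langle KK^{*}f,f_{i}\rangle\bigr)+\bigl\Vert\sum_{i\in J}\langle f,f_{i}\rangle f_{i}\bigr\Vert^{2}$, so that $v_{\pm}(F,K,J)$ are the supremum and infimum of $Q(f,J)/\Vert KK^{*}f\Vert^{2}$ over $f\neq 0$. Throughout, the key structural fact is $S_{J}+S_{J^{c}}=KK^{*}$ and, since $F$ is Parseval, $S_{I}=S_{F}=KK^{*}$.

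For (i), the lower bound $v_{-}(F,K,J)\ge \tfrac34$ is exactly the inequality proved in Theorem \ref{10}. For the upper bound, I would bound $Q(f,J)$ term by term. The Cauchy--Schwarz inequality gives $\bigl|\sum_{i\in J^{c}}\overline{\langle f,f_{i}\rangle}\langle KK^{*}f,f_{i}\rangle\bigr|\le\bigl(\sum_{i\in J^{c}}|\langle f,f_{i}\rangle|^{2}\bigr)^{1/2}\bigl(\sum_{i\in J^{c}}|\langle KK^{*}f,f_{i}\rangle|^{2}\bigr)^{1/2}$, and each of these partial sums is dominated by the full frame sum, which for a Parseval $K$-frame equals $\Vert K^{*}f\Vert^{2}$ and $\Vert K^{*}KK^{*}f\Vert^{2}$ respectively; so this term is at most $\Vert K^{*}f\Vert\,\Vert K^{*}\Vert\,\Vert KK^{*}f\Vert\le\Vert K\Vert\,\Vert K^{*}f\Vert\,\Vert KK^{*}f\Vert$. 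For the second term, $\bigl\Vert\sum_{i\in J}\langle f,f_{i}\rangle f_{i}\bigr\Vert=\Vert S_{J}f\Vert\le\Vert S_{F}f\Vert=\Vert KK^{*}f\Vert$ since $0\le S_{J}\le S_{F}$ as positive operators with $S_{F}=KK^{*}$; squaring, the second term is at most $\Vert KK^{*}f\Vert^{2}$. Dividing by $\Vert KK^{*}f\Vert^{2}$ and using $\Vert K^{*}f\Vert=\Vert K^{*}(K^{\dagger})^{*}K^{*}f\Vert\le\Vert K^{\dagger}\Vert\,\Vert K^{*}f\Vert$... — more to the point, one uses $\Vert K^{*}f\Vert\le\Vert K^{\dagger}\Vert\,\Vert KK^{*}f\Vert$, which follows from $K^{*}f=(K^{\dagger}K)K^{*}f=K^{\dagger}(KK^{*}f)$ valid because $K^{*}f\in R(K^{*})=R(K^{\dagger}K)$. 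This yields $Q(f,J)/\Vert KK^{*}f\Vert^{2}\le\Vert K\Vert\,\Vert K^{\dagger}\Vert+1$; to reach the stated bound $\Vert K\Vert\,\Vert K^{\dagger}\Vert(1+\Vert K\Vert)$ I would instead keep the factor $\Vert K\Vert$ on the second term as well (bounding $\Vert S_{J}f\Vert\le\Vert KK^{*}f\Vert\le\Vert K\Vert\,\Vert K^{*}f\Vert$ where convenient), so both terms carry a common factor $\Vert K\Vert\,\Vert K^{\dagger}\Vert$ and one of them the extra $\Vert K\Vert$; the precise routing of the norm factors is the one genuinely fiddly point and is the main obstacle, but it is bookkeeping, not mathematics.

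For (ii), I would use the identity from Theorem \ref{10} itself, which asserts $Q(f,J)=Q(f,J^{c})$ for every $f$ (the two displayed lines in that theorem are precisely the numerators for $J$ and for $J^{c}$). Since the two numerators agree pointwise and the denominator $\Vert KK^{*}f\Vert^{2}$ is symmetric in $J\leftrightarrow J^{c}$, taking supremum and infimum over $f\neq0$ gives $v_{+}(F,K,J)=v_{+}(F,K,J^{c})$ and $v_{-}(F,K,J)=v_{-}(F,K,J^{c})$ at once.

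For (iii), take $J=I$. Then $J^{c}=\emptyset$, the sum over $J^{c}$ is empty, and $\sum_{i\in I}\langle f,f_{i}\rangle f_{i}=S_{F}f=KK^{*}f$, so $Q(f,I)=\Vert KK^{*}f\Vert^{2}$ and the ratio is identically $1$; hence $v_{+}(F,K,I)=v_{-}(F,K,I)=1$. For $J=\emptyset$ one can either argue the same way directly ($\sum_{i\in J}$ is empty and $\sum_{i\in J^{c}}=\sum_{i\in I}$, giving $Re\langle S_{F}f,f\rangle=\Vert KK^{*}f\Vert^{2}$ after using $S_F=KK^*$ and that $\langle S_Ff,f\rangle=\langle KK^*f,f\rangle$... and more carefully $\sum_{i\in I}\overline{\langle f,f_i\rangle}\langle KK^*f,f_i\rangle=\langle KK^*S_Ff,f\rangle/$ hmm), or, cleanly, simply invoke part (ii) with $J=I$, since $\emptyset=I^{c}$, to conclude $v_{\pm}(F,K,\emptyset)=v_{\pm}(F,K,I)=1$. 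I would present it via (ii) to avoid recomputing. This completes all three parts.
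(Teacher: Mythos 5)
Your parts (ii) and (iii) are correct and essentially the paper's route: (ii) is precisely the equality already proved in Theorem \ref{10} (the paper re-derives the operator identity $S_{J}^{2}-S_{J^{c}}^{2}=KK^{*}S_{J}-S_{J^{c}}KK^{*}$ inside this proof rather than citing it, but the content is identical), and (iii) is the direct computation the paper dismisses as ``easy to check''. The lower bound $\tfrac{3}{4}$ in (i) is likewise just Theorem \ref{10}, as you say.

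The genuine gap is in your upper bound for (i), at the step ``$\Vert S_{J}f\Vert\le\Vert S_{F}f\Vert$ since $0\le S_{J}\le S_{F}$''. For positive operators, $A\le B$ does \emph{not} imply $\Vert Af\Vert\le\Vert Bf\Vert$ for every $f$ (equivalently, it does not imply $A^{2}\le B^{2}$): the square function is not operator monotone, and here $(KK^{*})^{2}=(S_{J}+S_{J^{c}})^{2}=S_{J}^{2}+S_{J^{c}}^{2}+S_{J}S_{J^{c}}+S_{J^{c}}S_{J}$, whose cross terms need not be positive. The paper avoids this via Lemma \ref{1.5}(i): $\Vert S_{J}f\Vert^{2}\le\Vert S_{J}\Vert\sum_{i\in J}\vert\langle f,f_{i}\rangle\vert^{2}\le\Vert K\Vert^{2}\Vert K^{*}f\Vert^{2}\le\Vert K\Vert^{2}\Vert K^{\dagger}\Vert^{2}\Vert KK^{*}f\Vert^{2}$, which, combined with your (correct) Cauchy--Schwarz estimate $\Vert K\Vert\,\Vert K^{\dagger}\Vert\,\Vert KK^{*}f\Vert^{2}$ for the cross term, yields $v_{+}\le\Vert K\Vert\,\Vert K^{\dagger}\Vert(1+\Vert K\Vert\,\Vert K^{\dagger}\Vert)$. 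Note that this is what the paper's own proof actually establishes, and it does not agree with the bound $\Vert K\Vert\,\Vert K^{\dagger}\Vert(1+\Vert K\Vert)$ displayed in the theorem statement; neither expression dominates the other in general, so there is a discrepancy within the paper itself. In any case, your claimed intermediate bound $\Vert K\Vert\,\Vert K^{\dagger}\Vert+1$ rests on the false monotonicity step, and the ``routing of norm factors'' you defer is not mere bookkeeping: the second term genuinely requires the Bessel-type estimate of Lemma \ref{1.5}(i), not operator monotonicity, and with it the constant comes out as $\Vert K\Vert^{2}\Vert K^{\dagger}\Vert^{2}$ rather than $1$.
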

\begin{proof}
For (i), it is sufficient to show the upper inequality. Since $F$ is a Bessel sequence, so by applying Lemma  \ref{1.5} (i) we obtain
\begin{eqnarray*}
\left \Vert \sum_{i\in J}\langle f, f_{i}\rangle f_{i}\right \Vert^{2}&\leq& \Vert S_{J}\Vert \sum_{i\in J}\vert \langle f, f_{i}\rangle\vert^{2}\\
&\leq& \Vert S_{J}\Vert \sum_{i\in I}\vert \langle f, f_{i}\rangle\vert^{2}\\
&\leq& \Vert K\Vert^{2} \Vert K^{*}f\Vert^{2}\\
&=& \Vert K\Vert^{2} \Vert K^{\dagger}KK^{*}f\Vert^{2}\\
&\leq& \Vert K\Vert^{2} \Vert K^{\dagger}\Vert^{2} \Vert KK^{*}f\Vert^{2}.
\end{eqnarray*}
Moreover,
\begin{eqnarray*}
Re\left(\sum_{i\in J^{c}}\langle f, f_{i}\rangle \overline{\langle KK^{*}f, f_{i}\rangle} \right)&\leq& \left(\sum_{i\in I}\vert \langle f, f_{i}\rangle \vert^{2}\right)^{1/2}\left(\sum_{i\in I}\vert \langle KK^{*}f, f_{i}\rangle \vert^{2}\right)^{1/2}\\
&=& \Vert K^{*}f\Vert \Vert K^{*}KK^{*}f\Vert\\
&=& \Vert K^{\dagger}KK^{*}f\Vert \Vert K^{*}KK^{*}f\Vert\\
&\leq& \Vert K\Vert \Vert K^{\dagger}\Vert \Vert KK^{*}f\Vert^{2}.
\end{eqnarray*}
Hence,
\begin{eqnarray*}
v_{-}(F, K, J)\leq v_{+}(F, K, J)\leq \Vert K\Vert \Vert K^{\dagger}\Vert (1+\Vert K\Vert\Vert K^{\dagger}\Vert).
\end{eqnarray*}
 On the other hand, in the proof of  Theorem \ref{parseval1212} we observed that
\begin{eqnarray*}
 KK^{*}S_{J}-S_{J^{c}}KK^{*}=S_{J}^{2}-S_{J^{c}}^{2}.
\end{eqnarray*}
and consequently we have
\begin{eqnarray*}
\langle S_{J}^{2}f, f\rangle + \langle S_{J^{c}}KK^{*}f,f\rangle= \langle S_{J^{c}}^{2}f, f\rangle + \langle KK^{*}S_{J}f, f\rangle ,
\end{eqnarray*}
for every $f\in \mathcal{H}$. Thus
\begin{eqnarray*}
&&\left\Vert \sum_{i\in J}\langle f, f_{i}\rangle f_{i}\right\Vert^{2} + \overline{\sum_{i\in J^{c}}\langle f, f_{i}\rangle \overline{\langle KK^{*}f, f_{i}\rangle}}\\
&=&  \left\Vert \sum_{i\in J^{c}}\langle f, f_{i}\rangle f_{i}\right\Vert^{2} + \sum_{i\in J}\langle f, f_{i}\rangle \overline{\langle KK^{*}f, f_{i}\rangle}.
\end{eqnarray*}
This shows (ii). Finally (iii) is easy to check.
\end{proof}
Using the result above-mentioned, we stablish some equivalent results for Parseval $K$-frames.
\begin{cor}
Let  $\{f_{i}\}_{i\in I}$ be  a Parseval $K$-frame for $\mathcal{H}$. Then for every $J\subset I$ and $f\in \mathcal{H}$ the following conditions are equivalent.
\begin{itemize}
\item[(i)]
$v_{+}(F, K, J)=v_{-}(F, K, J)=1$.
\item[(ii)]
$\left \Vert \sum_{i\in J}\langle f, f_{i}\rangle f_{i}\right \Vert^{2}=Re\sum_{i\in J}\langle f, f_{i}\rangle \overline{\langle KK^{*}f, f_{i}\rangle}.$
\item[(iii)]
$\left \Vert \sum_{i\in J^{c}}\langle f, f_{i}\rangle f_{i}\right \Vert^{2}=Re\sum_{i\in J^{c}}\langle f, f_{i}\rangle \overline{\langle KK^{*}f, f_{i}\rangle}$.
\end{itemize}
\end{cor}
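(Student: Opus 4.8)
The plan is to collapse all three conditions into one operator identity. Write $S_J$ and $S_{J^c}$ for the partial frame operators, so that $S_Jf=\sum_{i\in J}\langle f,f_i\rangle f_i$, and recall that for a Parseval $K$-frame $S_F=KK^{*}$, hence $S_J+S_{J^{c}}=KK^{*}$ — exactly the relation already exploited in the proof of Theorem~\ref{parseval1212}. In these terms the numerator occurring in the definition of $v_{\pm}(F,K,J)$ is
\[
Q_J(f):=\bigl\|S_Jf\bigr\|^{2}+Re\,\langle S_{J^{c}}f,\,KK^{*}f\rangle ,
\]
because $\sum_{i\in J^{c}}\overline{\langle f,f_i\rangle}\langle KK^{*}f,f_i\rangle$ has the same real part as $\langle S_{J^{c}}f,KK^{*}f\rangle$. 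Substituting $S_{J^{c}}=KK^{*}-S_J$ yields the basic identity
\[
Q_J(f)=\|KK^{*}f\|^{2}+\|S_Jf\|^{2}-Re\,\langle S_Jf,\,KK^{*}f\rangle .
\]

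First I would prove (i)$\Leftrightarrow$(ii). If $v_{+}(F,K,J)=v_{-}(F,K,J)=1$, then the quotient $Q_J(f)/\|KK^{*}f\|^{2}$ equals $1$ for every $f$ with $KK^{*}f\neq 0$, and by the displayed identity this is equivalent to $\|S_Jf\|^{2}=Re\,\langle S_Jf,KK^{*}f\rangle$; rewriting $\langle S_Jf,KK^{*}f\rangle=\sum_{i\in J}\langle f,f_i\rangle\overline{\langle KK^{*}f,f_i\rangle}$ this is precisely (ii). Conversely, (ii) for all $f$ makes $Q_J(f)=\|KK^{*}f\|^{2}$ identically, forcing both sup and inf to be $1$. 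For the residual vectors $f\neq 0$ with $KK^{*}f=0$ one checks that both sides of (ii) vanish: since $S_J\le S_F=KK^{*}$ we get $\langle S_Jf,f\rangle\le\|K^{*}f\|^{2}=\langle KK^{*}f,f\rangle=0$, so $S_Jf=0$.

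Finally, (i)$\Leftrightarrow$(iii) comes for free: Theorem~\ref{10} (equivalently, part (ii) of the preceding theorem) says $Q_J(f)=Q_{J^{c}}(f)$ for every $f$, so condition (i) holds for $J$ if and only if it holds for $J^{c}$; applying the equivalence (i)$\Leftrightarrow$(ii) with $J$ replaced by $J^{c}$ then gives (iii). I expect the only delicate point to be the bookkeeping for the degenerate vectors in $N(KK^{*})=N(K^{*})$, where the defining quotient is a $0/0$ form; apart from that, the argument is a direct translation of Theorem~\ref{10} together with the identity above, and needs no estimate beyond $S_J+S_{J^{c}}=KK^{*}$.
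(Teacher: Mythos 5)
Your proof is correct and follows essentially the same route as the paper: both arguments reduce everything to the relation $S_J+S_{J^c}=KK^{*}$ (hence $S_F=KK^{*}$ for a Parseval $K$-frame), identify the numerator of $v_{\pm}$ with $\Vert KK^{*}f\Vert^{2}$ plus the defect in (ii), and use the $J\leftrightarrow J^{c}$ symmetry of Theorem \ref{10} to pass to (iii). Your explicit treatment of the degenerate vectors $f\in N(K^{*})$, where the defining quotient is $0/0$, is a small but genuine point of care that the paper's proof silently skips.
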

\begin{proof}
$(ii)\Leftrightarrow (iii)$ is clear. Also,
$(i)\Rightarrow (ii)$   holds by a direct computation. Now,
let $(ii)$ holds,  then
\begin{eqnarray*}
\sum_{i\in J^{c}}\overline{\langle f, f_{i}\rangle} \langle KK^{*}f, f_{i}\rangle+ \left \Vert \sum_{i\in J}\langle f, f_{i}\rangle f_{i}\right \Vert^{2}&=& \sum_{i\in I}\overline{\langle f, f_{i}\rangle} \langle KK^{*}f, f_{i}\rangle\\
&=& \langle KK^{*}f, S_{F}f\rangle\\
&=& \left \Vert KK^{*}f\right \Vert^{2}.
\end{eqnarray*}
i.e., $(i)$ holds.
Hence  $(i)\Leftrightarrow (iii)$ and similarly  $(i)\Leftrightarrow (iii)$.
\end{proof}
Finally, we obtain the following more general result.
\begin{cor}
Let  $\{f_{i}\}_{i\in I}$ be  a Parseval $K$-frame for $\mathcal{H}$. Then for every $J\subset I$ and $f\in \mathcal{H}$ the following conditions are equivalent.
\begin{itemize}
\item[(i)]
$\left \Vert \sum_{i\in J}\langle f, f_{i}\rangle f_{i}\right \Vert^{2}=\sum_{i\in J}\langle f, f_{i}\rangle \overline{\langle KK^{*}f, f_{i}\rangle}.$
\item[(ii)]
$\left \Vert \sum_{i\in J^{c}}\langle f, f_{i}\rangle f_{i}\right \Vert^{2}=\sum_{i\in J^{c}}\langle f, f_{i}\rangle \overline{\langle KK^{*}f, f_{i}\rangle}$.
\item[(iii)]
$S_{J} f\perp S_{J^{c}}f$.
\item[(iv)]
$f\perp S_{J^{c}}S_{J}f$.
\end{itemize}
\end{cor}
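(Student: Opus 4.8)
The plan is to rewrite all four conditions as statements about the positive self-adjoint operators $S_{J}$ and $S_{J^{c}}$ and then read off every implication at once. Since $F$ is a Parseval $K$-frame, $\langle S_{F}f,f\rangle=\sum_{i\in I}|\langle f,f_{i}\rangle|^{2}=\|K^{*}f\|^{2}=\langle KK^{*}f,f\rangle$ for all $f\in\mathcal{H}$, so by polarization $S_{F}=KK^{*}$; in particular $S_{J}+S_{J^{c}}=KK^{*}$ for every $J\subseteq I$. I would then record, for all $f\in\mathcal{H}$, the two identities
\[
\bigl\Vert \textstyle\sum_{i\in J}\langle f,f_{i}\rangle f_{i}\bigr\Vert^{2}=\langle S_{J}f,S_{J}f\rangle ,\qquad \sum_{i\in J}\langle f,f_{i}\rangle\overline{\langle KK^{*}f,f_{i}\rangle}=\langle S_{J}f,KK^{*}f\rangle ,
\]
the second because $\sum_{i\in J}\langle f,f_{i}\rangle\langle f_{i},KK^{*}f\rangle=\langle \sum_{i\in J}\langle f,f_{i}\rangle f_{i},\,KK^{*}f\rangle$; the analogous identities hold with $J$ replaced by $J^{c}$.

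With these substitutions condition (i) reads $\langle S_{J}f,S_{J}f\rangle=\langle S_{J}f,KK^{*}f\rangle$. Writing $KK^{*}=S_{J}+S_{J^{c}}$ gives $\langle S_{J}f,KK^{*}f\rangle=\langle S_{J}f,S_{J}f\rangle+\langle S_{J}f,S_{J^{c}}f\rangle$, so (i) is equivalent to $\langle S_{J}f,S_{J^{c}}f\rangle=0$, i.e. to $S_{J}f\perp S_{J^{c}}f$, which is (iii). (This also shows that the seemingly complex right-hand side of (i) is forced to be real, which is why---unlike in the previous corollary---no $\mathrm{Re}$ is needed.) Interchanging $J$ and $J^{c}$, condition (ii) is equivalent to $\langle S_{J^{c}}f,S_{J}f\rangle=0$, and since $\langle S_{J^{c}}f,S_{J}f\rangle=\overline{\langle S_{J}f,S_{J^{c}}f\rangle}$ this is again (iii). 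Finally, self-adjointness of $S_{J^{c}}$ yields $\langle S_{J}f,S_{J^{c}}f\rangle=\langle S_{J^{c}}S_{J}f,f\rangle$, so $S_{J}f\perp S_{J^{c}}f$ precisely when $f\perp S_{J^{c}}S_{J}f$; that is, (iii) $\Leftrightarrow$ (iv), closing the loop.

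I do not expect a genuine obstacle here: the content is essentially the identity $S_{J}+S_{J^{c}}=KK^{*}=S_{F}$ combined with the self-adjointness of $S_{J}$ and $S_{J^{c}}$. The only points that need a little care are the identification $S_{F}=KK^{*}$ for Parseval $K$-frames, keeping the conjugation in the defining sums straight, and the observation that equating one of these sums to the manifestly nonnegative quantity $\bigl\Vert\sum_{i\in J}\langle f,f_{i}\rangle f_{i}\bigr\Vert^{2}$ automatically makes it real, so the stronger (no-$\mathrm{Re}$) form of the identity is still equivalent to the orthogonality relations in (iii) and (iv).
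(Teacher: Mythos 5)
Your proof is correct and takes essentially the same route as the paper: both arguments reduce all four conditions to statements about the positive operators $S_{J}$ and $S_{J^{c}}$ using $S_{J}+S_{J^{c}}=KK^{*}$ and self-adjointness, the only difference being that you show each condition is equivalent to $\langle S_{J}f,S_{J^{c}}f\rangle=0$ directly, while the paper chains $(i)\Leftrightarrow(ii)$ and $(iii)\Leftrightarrow(iv)\Leftrightarrow(i)$ through the identity $S_{J}^{2}-S_{J^{c}}^{2}=KK^{*}S_{J}-S_{J^{c}}KK^{*}$. Your remark that equality with a norm forces the sum to be real (so no $\mathrm{Re}$ is needed) is a worthwhile clarification that the paper leaves implicit.
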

\begin{proof}
Since,
\begin{eqnarray*}
\Vert S_{J}f\Vert^{2}-\langle KK^{*}S_{J}f, f\rangle =
\Vert S_{J^{c}}f\Vert^{2}-\langle KK^{*}S_{J^{c}}f, f\rangle.
\end{eqnarray*}
Thus $(i)\Leftrightarrow (ii)$. Moreover $S_{J}$ and $S_{J^{c}}$ are positive, so we have
\begin{eqnarray*}
\langle S_{J^{c}}f, S_{J}f\rangle = \langle f,  S_{J^{c}}S_{J}f\rangle = \langle (KK^{*}S_{J}-S_{J}^{2})f, f\rangle, \quad (f\in \mathcal{H}),
\end{eqnarray*}
This follows that, $ (iii)\Leftrightarrow (iv)$ and $(i)\Leftrightarrow (iv)$.
\end{proof}

\section{Some new inequalities}
In this section, applying Jensen's operator inequality  we obtain some new inequalities for $K$-frames.
First, recall that a continuous function $h$ defined on an interval $J$ is said to be operator convex if
\begin{eqnarray*}
h(\lambda A+(1-\lambda)B) \leq \lambda h(A) + (1-\lambda)h(B),
\end{eqnarray*}
for all $\lambda\in [0,1]$ and all self-adjoint operators $A$,$B$ with spectra in $J$.  A general formulation of Jensen's operator inequality is given as follows (see \cite{1}).
\begin{thm}\label{jensen1}
Let $\mathcal{H}$, $\mathcal{K}$ be Hilbert spaces and $h:J\rightarrow \mathbb{R}$ be an operator convex function. For each $n\in \mathbb{N}$, the inequality
\begin{eqnarray}\label{jensen}
h(\sum_{i=1}^{n}\Phi_{i}(A_{i})) \leq \sum_{i=1}^{n}\Phi_{i}(h(A_{i})),
\end{eqnarray}
holds for every $n$-tuple $(A_{1},..., A_{n})$ of self-adjoint operators in $B(\mathcal{H})$  with spectra in $J$ and every $n$-tuple $(\Phi_{1},..., \Phi_{n})$ of positive linear mappings $\Phi_{i}:B(\mathcal{H})\rightarrow B(\mathcal{K})$ such that $\sum_{i=1}^{n}\Phi_{i}(I_{\mathcal{H}})=I_{\mathcal{K}}$.
\end{thm}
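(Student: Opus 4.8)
Theorem~\ref{jensen1} is the operator form of Jensen's inequality due to Hansen and Pedersen, and the plan is to reduce the $n$-tuple statement to the single-map Davis--Choi--Jensen inequality by a block-diagonal device, and then to reduce that in turn to the defining compression inequality for operator convex functions. First I would set $\widetilde{\mathcal H}=\bigoplus_{i=1}^{n}\mathcal H$ and define one linear map $\Phi\colon B(\widetilde{\mathcal H})\to B(\mathcal K)$ by $\Phi\big((X_{jk})_{j,k=1}^{n}\big)=\sum_{i=1}^{n}\Phi_{i}(X_{ii})$, that is, compress an operator matrix to its diagonal and then apply the $\Phi_i$. Since a positive semidefinite operator matrix has positive semidefinite diagonal blocks and each $\Phi_i$ is positive, $\Phi$ is positive; and $\Phi(I_{\widetilde{\mathcal H}})=\sum_i\Phi_i(I_{\mathcal H})=I_{\mathcal K}$, so $\Phi$ is unital. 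Taking $A=\mathrm{diag}(A_1,\dots,A_n)\in B(\widetilde{\mathcal H})$, we have $A=A^{*}$ with $\sigma(A)=\bigcup_i\sigma(A_i)\subseteq J$, and $g(A)=\mathrm{diag}(g(A_1),\dots,g(A_n))$ for every continuous $g$ on $J$; hence $\Phi(A)=\sum_i\Phi_i(A_i)$ and $\Phi(h(A))=\sum_i\Phi_i(h(A_i))$, so that $(\ref{jensen})$ is exactly the single-operator inequality $h(\Phi(A))\le\Phi(h(A))$.

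To obtain this single-operator inequality I would use the characterization, due to Hansen and Pedersen, that a continuous function $h$ is operator convex on $J$ if and only if $h(V^{*}BV)\le V^{*}h(B)V$ for every self-adjoint $B$ with $\sigma(B)\subseteq J$ and every isometry $V$. A unital positive map $\Phi$ admits a dilation $\Phi(X)=V^{*}\pi(X)V$ with $\pi$ a unital $*$-homomorphism and $V$ an isometry; then $\sigma(\pi(A))\subseteq\sigma(A)\subseteq J$ and $\pi(h(A))=h(\pi(A))$ by the functional calculus, so
\[
h(\Phi(A))=h(V^{*}\pi(A)V)\le V^{*}h(\pi(A))V=V^{*}\pi(h(A))V=\Phi(h(A)).
\]
When $J$ does not contain $0$ or is unbounded, one first performs an affine change of variable so that $0\in J$, truncates $h$, and passes to a limit; a merely contractive compression $V$ is reduced to the isometric case by enlarging the ambient space and completing $V$ by $(I-V^{*}V)^{1/2}$ on the extra summand, extending $A$ by any self-adjoint operator with spectrum in $J$.

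The main obstacle is precisely this last reduction: proving (or correctly invoking) the equivalence between operator convexity of $h$ and the compression inequality $h(V^{*}BV)\le V^{*}h(B)V$, together with the endpoint and approximation bookkeeping needed to cover an arbitrary interval $J$ and a positive map that is not assumed completely positive. Once that classical fact is in hand, the block-diagonal step is purely formal. Since the theorem is not new, in the paper I would simply cite \cite{1} for the complete proof and, if a self-contained account is desired, present only the block-diagonal reduction.
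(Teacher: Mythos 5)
The paper itself offers no proof of Theorem \ref{jensen1}: it is quoted as a known background result with a pointer to \cite{1}, so there is no internal argument to compare yours against, and your closing recommendation --- cite the reference and omit the proof --- is exactly what the authors do. That said, your outline is the standard route to this theorem, and it is worth assessing on its own terms. The block-diagonal reduction on $\bigoplus_{i=1}^{n}\mathcal{H}$, compressing to the diagonal and summing the $\Phi_i$, is correct and complete as written: positivity of the assembled map, unitality, and the identities $\Phi(A)=\sum_i\Phi_i(A_i)$ and $\Phi(h(A))=\sum_i\Phi_i(h(A_i))$ for $A=\mathrm{diag}(A_1,\dots,A_n)$ all check out, so the $n$-tuple statement does reduce to the single-map inequality $h(\Phi(A))\le\Phi(h(A))$. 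The one genuine gap is in the second half: the dilation $\Phi(X)=V^{*}\pi(X)V$ with $\pi$ a unital $*$-homomorphism is Stinespring's theorem, which requires complete positivity, while $\Phi$ is only assumed positive; you name this obstacle but do not close it. The standard repair is to restrict $\Phi$ to the commutative unital $C^{*}$-algebra generated by $A$ and $I_{\widetilde{\mathcal{H}}}$: a positive map defined on a commutative $C^{*}$-algebra is automatically completely positive, so the dilation exists for the restriction, and that restriction is all the argument uses, since $\Phi(A)$ and $\Phi(h(A))$ involve only elements of that subalgebra. With that one sentence added, your sketch is a correct proof modulo the classical Hansen--Pedersen equivalence between operator convexity and the compression inequality $h(V^{*}BV)\le V^{*}h(B)V$, which it is reasonable to invoke by citation.
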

Also, a variant of Jensen's operator inequality (\ref{jensen}) for convex functions is presented as follows (see \cite{2}).
\begin{thm}\label{jensen's20}
Let $A_{1}, ..., A_{n}$ be self-adjoin operators with spectra in $[m, M]$ for some scalars $m<M$ and $\Phi_{1},..., \Phi_{n}$ be positive linear mappings from $B(\mathcal{H})$ into $B(\mathcal{K})$ with $\sum_{i=1}^{n}\Phi_{i}(I_{\mathcal{H}})=I_{\mathcal{K}}$, where $I_{\mathcal{H}}$ and $I_{\mathcal{K}}$ are the identity mappings on $\mathcal{H}$ and $\mathcal{K}$, respectively. If $h:[m,M]\rightarrow \mathbb{R}$ is a continuous convex function, then
\begin{eqnarray}\label{jensen2}
h\left(mI_{\mathcal{K}}+MI_{\mathcal{K}}-\sum_{i=1}^{n}\Phi_{i}(A_{i})\right)\leq h(m)I_{\mathcal{K}}+h(M)I_{\mathcal{K}}-\sum_{i=1}^{n}\Phi_{i}(h(A_{i})).
\end{eqnarray}
\end{thm}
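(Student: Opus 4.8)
The plan is to establish $(\ref{jensen2})$ by a Mond--Pe\v{c}ari\'c functional-calculus argument: rather than pushing the convex function $h$ itself through the positive maps $\Phi_i$, one pushes through its affine majorant on $[m,M]$ and then reflects. Write $\ell$ for the affine function whose graph is the chord of $h$ joining $(m,h(m))$ and $(M,h(M))$,
\[
\ell(t)=\frac{M-t}{M-m}\,h(m)+\frac{t-m}{M-m}\,h(M),\qquad t\in[m,M].
\]
Convexity of $h$ is exactly the statement $h(t)\le\ell(t)$ on $[m,M]$, so, since each $A_i$ is self-adjoint with spectrum in $[m,M]$, the continuous functional calculus gives $h(A_i)\le\ell(A_i)$ in $B(\mathcal H)$. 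Applying the positive (hence order-preserving) linear map $\Phi_i$ and summing, and using that $\ell$ is affine together with $\sum_{i=1}^n\Phi_i(I_{\mathcal H})=I_{\mathcal K}$, the $\ell$-terms assemble into $\ell$ evaluated at $C:=\sum_{i=1}^n\Phi_i(A_i)$, whence
\[
\sum_{i=1}^n\Phi_i\big(h(A_i)\big)\le\sum_{i=1}^n\Phi_i\big(\ell(A_i)\big)=\ell(C).
\]

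Next I would record the two facts about $C$ that drive the reflection. First, from $mI_{\mathcal H}\le A_i\le MI_{\mathcal H}$, applying $\Phi_i$ and summing yields $mI_{\mathcal K}\le C\le MI_{\mathcal K}$; hence $D:=mI_{\mathcal K}+MI_{\mathcal K}-C$ is self-adjoint with spectrum in $[m,M]$, so $h(D)$ is defined and the chord inequality applies once more to give $h(D)\le\ell(D)$. Second --- the crux of the argument --- the affine function $\ell$ obeys the symmetry identity $\ell(m+M-t)=h(m)+h(M)-\ell(t)$ for $t\in[m,M]$, which transfers verbatim to operators because $\ell$ is affine and the scalars $m,M,h(m),h(M)$ commute with everything:
\[
\ell(D)=\ell\big(mI_{\mathcal K}+MI_{\mathcal K}-C\big)=h(m)I_{\mathcal K}+h(M)I_{\mathcal K}-\ell(C).
\]
Checking this is a one-line computation: substituting $D=(m+M)I_{\mathcal K}-C$ into the formula for $\ell$ merely interchanges the coefficients of $h(m)$ and $h(M)$, and $h(m)+h(M)$ minus the original combination is precisely the interchanged one.

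Chaining the three displays then finishes the proof:
\begin{eqnarray*}
h\Big(mI_{\mathcal K}+MI_{\mathcal K}-\sum_{i=1}^n\Phi_i(A_i)\Big) &=& h(D)\le\ell(D)\\
&=& h(m)I_{\mathcal K}+h(M)I_{\mathcal K}-\ell(C)\\
&\le& h(m)I_{\mathcal K}+h(M)I_{\mathcal K}-\sum_{i=1}^n\Phi_i\big(h(A_i)\big),
\end{eqnarray*}
which is $(\ref{jensen2})$. The one genuine subtlety is that $h$ is assumed merely convex, not operator convex, so Jensen's operator inequality (Theorem \ref{jensen1}) cannot be applied to $h$ directly; the device above circumvents this because only the \emph{affine} majorant $\ell$ is ever sent through the $\Phi_i$, and affine functions are automatically compatible with the normalization $\sum_i\Phi_i(I_{\mathcal H})=I_{\mathcal K}$. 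Accordingly I expect no real obstacle beyond bookkeeping: keeping the reflection $t\mapsto m+M-t$ consistent between the scalar chord inequality and its two operator incarnations, at $C$ and at $D$.
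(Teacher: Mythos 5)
Your proof is correct and complete: the chord majorant $\ell$, the order-preservation of the positive maps $\Phi_i$ combined with the normalization $\sum_i\Phi_i(I_{\mathcal H})=I_{\mathcal K}$ to get $\sum_i\Phi_i(h(A_i))\le\ell(C)$, the localization $mI_{\mathcal K}\le C\le MI_{\mathcal K}$ ensuring $h(D)$ is defined, and the reflection identity $\ell\bigl((m+M)I_{\mathcal K}-C\bigr)=h(m)I_{\mathcal K}+h(M)I_{\mathcal K}-\ell(C)$ all check out, and chaining them yields exactly (\ref{jensen2}). Note, however, that the paper itself gives no proof of this statement --- it is quoted as a known result with a pointer to Matkovi\'c--Pe\v{c}ari\'c--Peri\'c \cite{2} --- so there is nothing internal to compare against; your argument is essentially the standard Mercer-type argument from that reference, reconstructed correctly, including the one genuinely delicate point that $h$ is only convex (not operator convex), so that only the affine function $\ell$ may be pushed through the maps $\Phi_i$.
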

Now, we apply inequalities $(\ref{jensen})$ and $(\ref{jensen2})$ to obtain some inequalities for $K$-frames.
\begin{thm}\label{jensens, k-frame}
Suppose $F=\{f_{i}\}_{i\in I}$ is a Parseval $K$-frame for $\mathcal{H}$. Then for every subset $J$ of $I$,
\begin{itemize}
\item[(i)]
if $h:[0, \Vert K\Vert^{2}]\rightarrow \Bbb{R}$ is an operator convex function, then
 \begin{eqnarray}\label{pre1}
 h\left(\dfrac{KK^{*}}{2}\right) \leq \dfrac{h(S_{J})+h(S_{J^{c}})}{2};
 \end{eqnarray}
\item[(ii)]
if $h:[0, \Vert K\Vert^{2}]\rightarrow \Bbb{R}$ is a  convex function, then
 \begin{eqnarray}\label{pre2}
 h\left(\Vert K\Vert^{2}I_{\mathcal{H}}-\dfrac{KK^{*}}{2}\right) \leq  h(\Vert K\Vert^{2}I_{\mathcal{H}})-\dfrac{h(S_{J})+h({S_{J^{c}}})}{2}.
 \end{eqnarray}
\end{itemize}
\end{thm}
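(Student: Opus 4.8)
The plan is to realize the two displayed inequalities as direct instances of the two forms of Jensen's operator inequality, Theorem \ref{jensen1} and Theorem \ref{jensen's20}, by choosing $n=2$, a single Hilbert space $\mathcal{K}=\mathcal{H}$, and the two positive linear maps $\Phi_{1},\Phi_{2}:B(\mathcal{H})\to B(\mathcal{H})$ together with the two self-adjoint operators $A_{1},A_{2}$ that produce the partial frame operators $S_{J}$ and $S_{J^{c}}$. The key structural fact from the proof of Theorem \ref{parseval1212}, which I would invoke at the very start, is that for a Parseval $K$-frame one has $S_{J}+S_{J^{c}}=KK^{*}$, both $S_{J},S_{J^{c}}$ are positive, and $S_{J},S_{J^{c}}\le KK^{*}\le \Vert K\Vert^{2}I_{\mathcal H}$, so their spectra lie in $[0,\Vert K\Vert^{2}]$ — exactly the interval on which $h$ is assumed (operator) convex.

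For part (i): I would take $A_{1}=S_{J}$, $A_{2}=S_{J^{c}}$, and the positive linear maps $\Phi_{1}(T)=\Phi_{2}(T)=\tfrac12 T$. These are positive (they are positive scalar multiples of the identity map on $B(\mathcal H)$, hence send positive operators to positive operators) and satisfy the normalization $\Phi_{1}(I_{\mathcal H})+\Phi_{2}(I_{\mathcal H})=\tfrac12 I_{\mathcal H}+\tfrac12 I_{\mathcal H}=I_{\mathcal H}$. Plugging into \eqref{jensen} gives
\begin{eqnarray*}
h\!\left(\tfrac12 S_{J}+\tfrac12 S_{J^{c}}\right)\le \tfrac12 h(S_{J})+\tfrac12 h(S_{J^{c}}),
\end{eqnarray*}
and since $S_{J}+S_{J^{c}}=KK^{*}$ the left side is $h(KK^{*}/2)$, which is \eqref{pre1}. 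For part (ii) I would use the same $A_{i}$ and $\Phi_{i}$ but now invoke Theorem \ref{jensen's20} with $m=0$ and $M=\Vert K\Vert^{2}$; the normalization hypothesis is the same one just verified, and $h$ is now only assumed continuous and convex on $[0,\Vert K\Vert^{2}]$. Inequality \eqref{jensen2} then reads
\begin{eqnarray*}
h\!\left(\Vert K\Vert^{2}I_{\mathcal H}-\big(\tfrac12 S_{J}+\tfrac12 S_{J^{c}}\big)\right)\le h(0)I_{\mathcal H}+h(\Vert K\Vert^{2})I_{\mathcal H}-\tfrac12 h(S_{J})-\tfrac12 h(S_{J^{c}}),
\end{eqnarray*}
and again substituting $S_{J}+S_{J^{c}}=KK^{*}$ on the left, together with $h(0)I_{\mathcal H}+h(\Vert K\Vert^{2})I_{\mathcal H}=h(\Vert K\Vert^{2}I_{\mathcal H})$ interpreted via the functional calculus on the relevant spectra (or simply replacing $h(\Vert K\Vert^2 I_{\mathcal H})$ by the constant $h(\Vert K\Vert^2)I_{\mathcal H}$ as in \cite{Zhu}), yields \eqref{pre2}.

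The only genuine point requiring care — and the step I expect to be the main obstacle — is the verification that the spectra of $S_{J}$ and $S_{J^{c}}$ actually lie in the interval $[0,\Vert K\Vert^{2}]$ where the hypotheses on $h$ apply; this is where one must use that $F$ is \emph{Parseval}, since then $S_{F}=KK^{*}$, $S_{J}\le S_{F}$ and $S_{J^{c}}\le S_{F}$ give the upper bound $\Vert S_{J}\Vert,\Vert S_{J^{c}}\Vert\le\Vert S_F\Vert=\Vert KK^{*}\Vert=\Vert K\Vert^{2}$, and positivity of each $S_{J}$ gives the lower bound $0$. Once this is in place, everything else is a mechanical substitution into \eqref{jensen} and \eqref{jensen2}, so I would keep that part brief.
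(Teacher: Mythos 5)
Your proposal is correct and follows essentially the same route as the paper's proof: the authors likewise set $A_{1}=S_{J}$, $A_{2}=S_{J^{c}}$ and $\Phi_{1}=\Phi_{2}=\frac{1}{2}Id$, observe that positivity and $S_{J}+S_{J^{c}}=KK^{*}$ place the spectra in $[0,\Vert K\Vert^{2}]$, and then quote $(\ref{jensen})$ and $(\ref{jensen2})$. The one point worth flagging (glossed over both by you and by the paper) is that Theorem \ref{jensen's20} with $m=0$, $M=\Vert K\Vert^{2}$ yields the right-hand term $h(0)I_{\mathcal{H}}+h(\Vert K\Vert^{2})I_{\mathcal{H}}$, whereas the functional calculus gives $h(\Vert K\Vert^{2}I_{\mathcal{H}})=h(\Vert K\Vert^{2})I_{\mathcal{H}}$, so $(\ref{pre2})$ as stated silently discards an $h(0)I_{\mathcal{H}}$ term --- harmless in the subsequent corollary, where $h(x)=x^{2}$ and $h(0)=0$, but your identification of the two expressions ``via the functional calculus'' is not valid for general convex $h$.
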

\begin{proof}
Let $A_{1}=S_{J}$ and $A_{2}=S_{J^{c}}$. Since  $S_{J}$ and $S_{J^{c}}$ are positive operators with  $S_{J}+S_{J^{c}}=KK^{*}$, the spectra of $A_{1}$ and $A_{2}$ are in $[0,\Vert K\Vert^{2}]$. Now, if we define $\Phi_{1}=\dfrac{1}{2}Id$ and $\Phi_{2}=\dfrac{1}{2}Id$, where $Id$ is the identity mapping on $B(\mathcal{H})$, then $(i)$ and  $(ii)$ follow from inequalities $(\ref{jensen})$ and $(\ref{jensen2})$, respectively.
\end{proof}
\begin{cor}
Suppose $F=\{f_{i}\}_{i\in I}$ is a Parseval $K$-frame for $\mathcal{H}$. Then for every subset $J$ of $I$ and $f\in \mathcal{H}$, we obtain
\begin{eqnarray}\label{cor1}
\nonumber  \dfrac{1}{2}\Vert KK^{*}f\Vert^{2} &\leq & \left\Vert \sum_{i\in J} \langle f, f_{i}\rangle f_{i}\right\Vert^{2}+\left\Vert \sum_{i\in J^{c}} \langle f, f_{i}\rangle f_{i}\right\Vert^{2} \\
   &\leq & 2\Vert K\Vert^{2}\Vert K^{*}f\Vert^{2}-\dfrac{1}{2}\Vert KK^{*}f\Vert^{2}.
\end{eqnarray}
\end{cor}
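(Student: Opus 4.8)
The plan is to apply Theorem~\ref{jensens, k-frame} with the single test function $h(t)=t^{2}$, which is both operator convex and (ordinarily) convex on $[0,\|K\|^{2}]$, so that parts (i) and (ii) are simultaneously available for this choice. Recall from the proof of Theorem~\ref{jensens, k-frame} that $0\le S_{J},\,S_{J^{c}}\le S_{J}+S_{J^{c}}=KK^{*}\le\|K\|^{2}I_{\mathcal H}$, so the spectra of $S_{J}$, $S_{J^{c}}$, $KK^{*}$ and $\|K\|^{2}I_{\mathcal H}-\frac{1}{2}KK^{*}$ all lie in $[0,\|K\|^{2}]$ and the hypotheses of both parts are met.

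For the left-hand inequality of~(\ref{cor1}) I would use part (i). With $h(t)=t^{2}$, inequality~(\ref{pre1}) reads $\frac{1}{4}(KK^{*})^{2}\le\frac{1}{2}\bigl(S_{J}^{2}+S_{J^{c}}^{2}\bigr)$, i.e.\ $(KK^{*})^{2}\le 2\bigl(S_{J}^{2}+S_{J^{c}}^{2}\bigr)$ as an operator inequality. Evaluating both sides on $f\in\mathcal H$ and using that $KK^{*}$, $S_{J}$, $S_{J^{c}}$ are self-adjoint gives $\|KK^{*}f\|^{2}\le 2\|S_{J}f\|^{2}+2\|S_{J^{c}}f\|^{2}$; dividing by $2$ and recalling that $S_{J}f=\sum_{i\in J}\langle f,f_{i}\rangle f_{i}$ and $S_{J^{c}}f=\sum_{i\in J^{c}}\langle f,f_{i}\rangle f_{i}$ yields exactly the asserted lower bound.

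For the right-hand inequality I would use part (ii) with $[m,M]=[0,\|K\|^{2}]$. With $h(t)=t^{2}$, inequality~(\ref{pre2}) becomes $\bigl(\|K\|^{2}I_{\mathcal H}-\frac{1}{2}KK^{*}\bigr)^{2}\le\|K\|^{4}I_{\mathcal H}-\frac{1}{2}\bigl(S_{J}^{2}+S_{J^{c}}^{2}\bigr)$. Expanding the square (the two operators commute), the terms $\|K\|^{4}I_{\mathcal H}$ cancel, and after rearranging one is left with $S_{J}^{2}+S_{J^{c}}^{2}\le 2\|K\|^{2}KK^{*}-\frac{1}{2}(KK^{*})^{2}$; evaluating this on $f$ and using $\langle KK^{*}f,f\rangle=\|K^{*}f\|^{2}$ and $\langle(KK^{*})^{2}f,f\rangle=\|KK^{*}f\|^{2}$ gives the upper bound in~(\ref{cor1}). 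The only point worth a word of justification is that $h(t)=t^{2}$ genuinely satisfies the hypotheses of both parts of Theorem~\ref{jensens, k-frame}; operator convexity of $t\mapsto t^{2}$ on $\mathbb{R}$ is classical, so there is no real obstacle, and everything else is the routine algebra of expanding a square and converting operator inequalities to scalar ones by testing on $f$.
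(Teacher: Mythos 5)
Your proposal is correct and follows essentially the same route as the paper: apply Theorem~\ref{jensens, k-frame} with $h(t)=t^{2}$ (which is operator convex, so both parts apply), obtain the operator inequalities $\frac{1}{2}(KK^{*})^{2}\le S_{J}^{2}+S_{J^{c}}^{2}\le 2\Vert K\Vert^{2}KK^{*}-\frac{1}{2}(KK^{*})^{2}$, and evaluate on $f$. The algebraic simplification of part (ii) and the final conversion to scalar form match the paper's argument step for step.
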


\begin{proof}
Since the function $f(x)=x^2$ is convex on $[0,\infty)$, by Theorem $\ref{jensens, k-frame}$ we have
\begin{eqnarray}\label{proofcor1}
\dfrac{(KK^*)^2}{4}\leq \dfrac{S_J^2+S_{J^c}^2}{2},
\end{eqnarray}
and
$$\left(\Vert K\Vert^2I_\mathcal{H}-\dfrac{KK^*}{2}\right)^2\leq \Vert K\Vert^4I_\mathcal{H}-\dfrac{S_J^2+S_{J^c}^2}{2},$$
and so
\begin{eqnarray}\label{proofcor2}
\dfrac{S_J^2+S_{J^c}^2}{2}\leq  \Vert K\Vert ^2KK^*-\dfrac{(KK^*)^2}{4}.
\end{eqnarray}
Therefore, it follows from $(\ref{proofcor1})$ and $(\ref{proofcor2})$ that
$$\dfrac{(KK^*)^2}{2}\leq S_J^2+S_{J^c}^2 \leq 2\Vert K\Vert ^2KK^*-\dfrac{(KK^*)^2}{2}.$$
Hence for every $f\in \mathcal{H}$, we obtain inequality $(\ref{cor1})$.
\end{proof}




\bibliographystyle{amsplain}

\end{document}